\newtheorem{thm}{Theorem}
\newtheorem{lemma}[thm]{Lemma}
\theoremstyle{definition}
\theoremstyle{remark}
\numberwithin{thm}{section}
\DeclareMathAlphabet{\mathsfsl}{OT1}{cmss}{m}{sl}
\renewcommand{\phi}{\varphi}
\newcommand{\argmin}{\operatorname*{arg\; min}}
\newcommand{\bx}{\mathbf{x}}
\newcommand{\ba}{\mathbf{a}}
\newcommand{\bb}{\mathbf{b}}
\newcommand{\by}{\mathbf{y}}
\newcommand{\bT}{\mathbf{T}}
\newcommand{\bc}{\mathbf{c}}
\newcommand{\bz}{\mathbf{z}}
\newcommand{\bD}{\mathbf{D}}
\newcommand{\bw}{\mathbf{w}}
\def\reals{\mathbb{R}}
\def\calG{\mathcal{G}}
\def\calE{\mathcal{E}}
\def\calV{\mathcal{V}}
\def\bx{\mathbf{x}}
\def\bu{\mathbf{u}}
\def\b0{\mathbf{0}}
\def\bC{\mathbf{C}}
\def\bv{\mathbf{v}}
\def\bt{\mathbf{t}}
\def\bA{\mathbf{A}}
\def\bB{\mathbf{B}}
\def\prox{\mathrm{prox}}
\def\bI{\mathbf{I}}
\newcommand{\blind}{0}
\begin{document}

\def\spacingset#1{\renewcommand{\baselinestretch}%
{#1}\small\normalsize} \spacingset{1}

%%%%%%%%%%%%%%%%%%%%%%%%%%%%%%%%%%%%%%%%%%%%%%%%%%%%%%%%%%%%%%%%%%%%%%%%%%%%%%

\if0\blind
{
  \title{\bf An Algorithm for Graph-Fused Lasso Based on Graph Decomposition}
  \author{Feng Yu\hspace{.2cm}\\
    Department of Mathematics, University of Central Florida\hspace{.2cm}\\
    and\\
    Yi Yang \\
    Department of Mathematics and Statistics, McGill University\hspace{.2cm}\\
    and\\
    Teng Zhang\thanks{
    The authors are gratefully supported by National Science Foundation (NSF), grant CNS-1739736.} \\
    Department of Mathematics, University of Central Florida
    }
  \maketitle
} \fi

\if1\blind
{
  \bigskip
  \bigskip
  \bigskip
  \begin{center}
    {\LARGE\bf Title}
\end{center}
  \medskip
} \fi

\bigskip
\begin{abstract}
This work proposes a new algorithm for solving the graph-fused lasso (GFL), a method for parameter estimation that operates under the assumption that the signal tends to be locally constant over a predefined graph structure. The proposed method applies the alternating direction method of multipliers  (ADMM) algorithm and is based on the decomposition of the objective function into two components. While ADMM has been widely used in this problem, existing works such as network lasso decompose the objective function into the loss function component and the total variation penalty component. In comparison, this work proposes to decompose the objective function into two components, where one component is the loss function plus part of the total variation penalty, and the other component is the remaining total variation penalty. Compared with the network lasso algorithm, this method has a smaller computational cost per iteration and converges faster in most simulations numerically.
\end{abstract}

\noindent%
{\it Keywords:} alternating direction methods of multipliers, graph-fused lasso, nonsmooth convex optimization
\vfill

\newpage
\spacingset{1.5} % DON'T change the spacing!
\section{Introduction}
In this article, we consider graph-fused lasso, an estimation method based on noisy observations and the assumption that the signal tends to be locally constant over a predefined graph structure. Given a graph $\calG=(\calV,\calE)$, where $\calV$ is the set of vertices and $\calE$ is the set of edges, we let $\bx_i\in\reals^p$ be the signal that  is associated with the $i$-the vertex of the graph,  then GFL is defined as the solution to the following optimization problem:
\begin{equation}\label{eq:problem0}
\{\hat{\bx}_i\}_{i\in\calV}=\argmin_{\{{\bx}_i\}_{i\in\calG}\subset\reals^p} \sum_{i\in\calV}f_i(\bx_i)+\lambda \sum_{(s,t)\in\calE}\|\bx_r-\bx_s\|,
\end{equation}
where the first component  is a loss function for the observation $\bx_i$, and the second component uses the total variation norm to penalize the difference between the two signals on the edges in the graph.

There have been extensive studies on \eqref{eq:problem0} with $p=1$ (i.e., $\bx_i$ are scalars) and many algorithms have been developed. When graph $\calG$ is a one-dimensional chain graph, then it is the standard fused lasso problem \citep{Tibshirani05sparsityand}. For this problem, there exist finite-step algorithms with computational costs of $O(n)$: a taut-string method proposed by~\cite{davies2001}, a method based on analyzing its dual problem by \cite{6579659}, a dynamic programming-based approach by~\cite{Johnson2013}, and a modular proximal optimization approach by~\cite{barberoTV14} all solve the problem with $O(n)$ complexity. When the $\calG$ is a two-dimensional grid graph, it has important applications in image denoising and it often referred to as total-variation denoising~\citep{RUDIN1992259}, and parametric max-flow algorithm \citep{Antonin2009} can be used to solve \eqref{eq:problem0} in finite steps. When the graph is a tree, Kolmogorov et al.~\citep{doi:10.1137/15M1010257} extended the dynamic programming approach of Johnson to solve the fused lasso problem. While these algorithms can find the exact solution in finite steps, they only apply to some specific graph structure and do not work for general graphs. In addition, they cannot be naturally generalized to the setting of $p>1$, which is sometimes called group fused lasso~\citep{bleakley:hal-00602121}.

In addition, many iterative algorithms based on  convex optimization algorithms have been proposed to solve \eqref{eq:problem0}. For example, \citet{Liu2010} use a projected gradient descent method
to solve the dual of \eqref{eq:problem0}, and reformulate it as the problem of finding an ``appropriate'' subgradient of the fused penalty at the minimizer. \citet{chen2012} propose the smoothing proximal gradient (SPG) method. ~\citet{Lin2014} proposed an alternating linearization
method. \citet{Yu2014} proposed a majorization-minimization
method. One of the more popular methods is the alternating direction method of multipliers (ADMM), due to its simplicity and competitive empirical performance. \citet{Ye2011} and~\citet{Wahlberg2012} proposed algorithms based on the
ADMM method. However, there is a step of solving a linear system for the $n\times n$ matrix $\bI+\rho \bD^T\bD$, which is usually in the order of $O(n^2)$. \citet{Zhu2015} proposed a modified ADMM algorithm that has a smaller computational cost of $O(n)$ in an update step, but this modification generally converges slower. \citet{Ramdas2015} proposed a special ADMM algorithm that used dynamic programming in one of the update step, which can be used in the trend filtering problem, or when $\bD$ has a diagonal structure. \citet{barberoTV14} propose a method based on the Douglas-Rachford decomposition for the two-dimensional grid graph, which can be considered as the dual algorithm of ADMM~\citep{Eckstein1992}. \citet{Tansey2015} leveraged fast 1D fused lasso solvers in an ADMM method based on graph decomposition, but it can only be applied to the case when $p=1$.  \citet{Hallac2015} proposed the network lasso algorithm based on ADMM that can be applied to any generic graph and any $p\geq 1$.

There exist other types of algorithms as well. \citet{friedman2007} and  \citet{Arnold2016} gave solution path algorithms (tracing out the solution over all $\lambda\geq 0$).  Some other algorithms include a working-set/greedy algorithm~\citep{doi:10.1137/17M1113436} and an algorithm based on an active set search~\citep{doi:10.1198/jcgs.2011.09203}.

Among all algorithms, the network lasso \citep{Hallac2015} is particularly interesting since it is scalable to any large graphs and can be applied to the case $p\geq 1$. The algorithm proposed in this work follows this direction and can be considered as an improvement of the network lasso algorithm. The main contribution of this work is a novel ADMM method by dividing the objective function into two parts based on graph decomposition so that one of the subgraphs does not contain any two adjacent edges. This method can be applied to any graph and can be generalized to some other problems such as trend filtering. Compared with the  network lasso algorithm in \citep{Hallac2015}, it reduces the computational complexity per iteration and achieves a faster convergence rate. %While augmented ADMM~\citep{Zhu2015} also has this property, the proposed algorithm converges within fewer iterations in simulations.

The rest of this paper is organized as follows. In Section 2 we introduce our proposed method and analyze its computational complexity per iteration as well as convergence rates and establish the advantage of the proposed algorithm theoretically. Then we compare our algorithm with the network lasso algorithm in Section 3, both in simulated data sets and a real-life data set, which verifies the advantage of the proposed algorithm numerically.

%This decomposition is known as edge coloring in graph theory, and there exist efficient algorithms for such decomposition. Compared with the \citep{Tansey2015} and most works above, this work allows more different scenarios. For example, the cases when $x_i$ and $y_i$ are multivariate, which is called group fused lasso in some works~\citep{bleakley:hal-00602121}, or the loss function $ljitfused$ is not a square function (cite structure changes,,,,). In addition, there exists simpler algorithms for edge coloring than the algorithms for trails in \citep{Tansey2015}.

\section{Proposed Method}
In this section, we will  review the network lasso algorithm \citep{Hallac2015} for solving \eqref{eq:problem0} in Section~\ref{sec:review},  present our algorithm in Sections~\ref{sec:proposed1} and \ref{sec:proposed2}, and analyze its performance in terms of computational cost per iteration and convergence rate in Sections~\ref{sec:implementation} and~\ref{sec:convergence}.

\subsection{Review: Network lasso}\label{sec:review}
\citet{Hallac2015} introduce the following ``Network Lasso'' algorithm: for any $(s,t)\in\calE$, introduce a pair of variables $\bz_{st}, \bz_{ts}\in\reals^p$, which are the copies of $\bx_r$ and $\bx_s$ respectively, and rewrite the problem \eqref{eq:problem0} as follows:
\begin{equation}\label{eq:networklasso}
\argmin_{\{{\bx}_i\}_{i\in\calG}, \{{\bz}_{st},{\bz}_{ts}\}_{(s,t)\in\calE}} \sum_{i\in\calV}f_i(\bx_i)+\lambda \sum_{(s,t)\in\calE}\|{\bz}_{st}-{\bz}_{ts}\|,\,\,\text{s.t. $\bx_s=\bz_{st}$ and $\bx_t=\bz_{ts}$ for all $(s,t)\in\calE$}.
\end{equation}
Then the standard ADMM routine would apply: let $\bu_{st}, \bu_{ts}$ be the dual variables for $\bx_s-\bz_{st}$ and $\bx_t-\bz_{ts}$ respectively, then the augmented Lagrangian is (here $x$ and $z$ represents $\{\bx_i\}_{i\in\calG}$ and $\{\bz_i\}_{i\in\calG}$):
\begin{align}\label{eq:lagrangian0}
L_\rho(x,z,u)=&\sum_{i\in\calV}f_i(\bx_i)+\sum_{(s,t)\in\calE}\Big(\lambda \|{\bz}_{st}-{\bz}_{ts}\|+\bu_{st}^T(\bx_s-\bz_{st})+\bu_{ts}^T(\bx_t-\bz_{ts})\\&+\frac{\rho}{2}\|\bx_s-\bz_{st}\|^2+\frac{\rho}{2}\|\bx_t-\bz_{ts}\|^2\Big)\nonumber
\end{align}
and the algorithm can be written as
\begin{align}\label{eq:ADMM}
&x^{(k+1)}=\argmin_{x}L_\rho(x,z^{(k)},u^{(k)})\\
&z^{(k+1)}=\argmin_{z}L_\rho(x^{(k+1)},z,u^{(k)})\\
&\bu_{st}^{(k+1)}=\bu_{st}^{(k)}+\rho(\bx_s^{(k+1)}-\bz_{st}^{(k+1)}),\,\,\bu_{ts}^{(k+1)}=\bu_{ts}^{(k)}+\rho(\bx_t^{(k+1)}-\bz_{ts}^{(k+1)}).
\end{align}
The advantage of this algorithm is that, in each iteration, the optimization problem can be decomposed into smaller subproblems: the updates of $x$ requires solving  problems in the form of $\min_{\bx_i} f_i(\bx_i)+\|\bx_i-\bt\|^2$, which has explicit solutions for a large range of $f_i$; and the updates of $z$ requires solving  $\min_{
\bz_{st}, \bz_{ts}} \|\bz_{ts}-\bt_1\|^2+\|\bz_{st}-\bt_2\|^2+\lambda\|\bz_{ts}-\bz_{st}\|$, which has explicit solutions.% The  latter problem has an explicit function, and the

\subsection{Proposed Method: A different way of splitting the objective function}\label{sec:proposed1}
In this section, we will propose another ADMM algorithm for solving \eqref{eq:problem0}, based on the reformulation as follows: We will divide the set of edges $\calE$ into $\calE_0$ and $\calE_1$, such that the set $\calE_0$ does not contain two neighboring edges, and then solve the following optimization problem:
\begin{align}\label{eq:proposed1}
\argmin_{\{\bx_i\}_{i\in\calV}, \{\bz_{st}\}_{(s,t)\in\calE_1}} &\left(\sum_{i\in\calV}f_i(\bx_i)+\lambda\sum_{(s,t)\in\calE_0}\|\bx_s-\bx_t\|\right) + \lambda\sum_{(s,t)\in\calE_1}\|\bz_{st}-\bz_{ts}\|,\\&\text{s.t. $\bx_s=\bz_{st}$ and $\bx_t=\bz_{ts}$ for all $(s,t)\in\calE_1$.}\nonumber
\end{align}
%Compared with \eqref{eq:networklasso}, our algorithm applies a different decomposition of the objective function, based on a partition of the set of edges $\calE$  into two subsets $\calE_0\cup\calE_1$. In particular, we will use the decomposition such that the set $\calE_0$ does not contain two neighboring edges, which would make the corresponding ADMM algorithm simpler.
Since this formulation is different than \eqref{eq:networklasso}, its associated ADMM routine is also different. Let $\bu_{st}, \bu_{ts}$ be the dual variables for $\bx_s-\bz_{st}$ and $\bx_t-\bz_{ts}$ respectively, then the augmented Lagrangian is
\begin{align}\label{eq:lagrangian1}
\hat{L}_\rho(x,z,u)=\sum_{i\in\calV}f_i(\bx_i)&+\lambda\sum_{(s,t)\in\calE_0}\|\bx_s-\bx_t\|+\sum_{(s,t)\in\calE_1}\Big(\lambda \|{\bz}_{st}-{\bz}_{ts}\|+\bu_{st}^T(\bx_s-\bz_{st})\\&+\bu_{ts}^T(\bx_t-\bz_{ts})+\frac{\rho}{2}\|\bx_s-\bz_{st}\|^2+\frac{\rho}{2}\|\bx_t-\bz_{ts}\|^2\Big).
\end{align}
and the update formula is
\begin{align}\label{eq:ADMM1}
&x^{(k+1)}=\argmin_{x}\hat{L}_\rho(x,z^{(k)},u^{(k)})\\
&z^{(k+1)}=\argmin_{z}\hat{L}_\rho(x^{(k+1)},z,u^{(k)})\\
&\bu_{st}^{(k+1)}=\bu_{st}^{(k)}+\rho(\bx_s^{(k+1)}-\bz_{st}^{(k+1)}),\,\,\bu_{ts}^{(k+1)}=\bu_{ts}^{(k)}+\rho(\bx_t^{(k+1)}-\bz_{ts}^{(k+1)}).\label{eq:ADMM13}
\end{align}
While the update formula for $x$ \eqref{eq:ADMM1} is similar to \eqref{eq:ADMM}, it requires solving a slightly different problem  due to the additional component $\lambda\sum_{(s,t)\in\calE_0}\|\bx_s-\bx_t\|$. For any $(s,t)\in\calE_0$, the ADMM procedure needs to solve
\begin{equation}\label{eq:simplified2}
\argmin_{\bx_s,\bx_t\in\reals^p}f_s(\bx_s)+f_t(\bx_t)+\frac{\rho}{2}d_s(\bx_{s}-\bt_1)^2+\frac{\rho}{2}d_t(\bx_{t}-\bt_2)^2 + \lambda \|\bx_{s}-\bx_{t}\|,
\end{equation}
where $\bt_1,\bt_2\in\reals^p$ and $d_s$ denotes the degree of the vertex $s$ in the graph $(\calV, \calE_1)$. For many choices of $f_s$ and $f_t$ (for example, square functions), this problem has an explicit solution.

Intuitively, we expect that the proposed algorithm would achieve a faster convergence rate than  \eqref{eq:networklasso}: \eqref{eq:proposed1} has fewer ``dummy variables'' in the form of $\bz_{st}$ ($2|\calE_1|$ instead of $2|\calE|$) and \eqref{eq:lagrangian1} has fewer dual parameters than \eqref{eq:proposed1}. As a result, the Lagrangian in \eqref{eq:lagrangian1} contains fewer variables than \eqref{eq:lagrangian0} and we expect the algorithm to converge faster.
\subsection{An equivalent formulation}\label{sec:proposed2}% with a smaller computational cost per iteration}
In this section, we propose an equivalent form of the update formula  \eqref{eq:ADMM1}-\eqref{eq:ADMM13}, with a smaller computational cost per iteration. In particular, we consider the  ADMM algorithm for solving
\begin{align}\label{eq:proposed3}
\argmin_{\{\bx_i\}_{i\in\calV}, \{\bz_{st}\}_{(s,t)\in\calE_1}} \left(\sum_{i\in\calV}f_i(\bx_i)+\lambda\sum_{(s,t)\in\calE_0}\|\bx_s-\bx_t\|\right) + \lambda\sum_{(s,t)\in\calE_1}\|\bz_{st}\|,\\\,\,\text{s.t. $\bz_{st}=\bx_s-\bx_t$.}\nonumber
\end{align}
For its Lagrangian
\begin{align}\label{eq:lagrangian3}
\tilde{L}_\rho(x,z,u)=\sum_{i\in\calV}f_i(\bx_i)&+\lambda\sum_{(s,t)\in\calE_0}\|\bx_s-\bx_t\|+\sum_{(s,t)\in\calE_1}\Big(\lambda \|{\bz}_{st}\|+\bu_{st}^T(\bz_{st}-\bx_s+\bx_t)\\&+\frac{\rho}{2}\|\bz_{st}-\bx_s+\bx_t\|^2\Big),
\end{align}
the preconditioned ADMM algorithm can be written as
\begin{align}\label{eq:ADMM3}
&x^{(k+1)}=\argmin_{x}\tilde{L}_\rho(x,z^{(k)},u^{(k)})+\frac{\rho}{2}\sum_{(s,t)\in\calE_1}\|\bx_s+\bx_t-\bx_s^{(k)}-\bx_t^{(k)}\|^2\\
&z^{(k+1)}=\argmin_{z}\tilde{L}_\rho(x^{(k+1)},z,u^{(k)})\\
&u_{st}^{(k+1)}=u_{st}^{(k)}+\rho(z_{st}^{(k+1)}-x_s^{(k+1)}+x_t^{(k+1)}).\label{eq:ADMM33}
\end{align}
It is called preconditioned ADMM due to the additional component $\frac{\rho}{2}\sum_{(s,t)\in\calE_1}\|\bx_s+\bx_t-\bx_s^{(k)}-\bx_t^{(k)}\|^2$ in the update of $x$.

Compared with  the update formula  \eqref{eq:ADMM1}-\eqref{eq:ADMM13},  the computational cost of \eqref{eq:ADMM3}-\eqref{eq:ADMM33} is smaller since there are no ``dummy variables'' $\bz_{ts}$ and its associated dual variables $\bu_{ts}$. In addition, Lemma~\ref{lemma:ADMM3} shows that  \eqref{eq:ADMM3}-\eqref{eq:ADMM33} is equivalent to the update formula \eqref{eq:ADMM1}-\eqref{eq:ADMM13}. Its proof is deferred to Section~\ref{sec:proof}.
\begin{lemma}\label{lemma:ADMM3}
The update formula \eqref{eq:ADMM3}-\eqref{eq:ADMM33} with $\rho=\rho_0$ is equivalent to the update formula \eqref{eq:ADMM1}-\eqref{eq:ADMM13} with $\rho=2\rho_0$.
\end{lemma}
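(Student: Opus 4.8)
The plan is to build an explicit dictionary between the two families of iterates and then prove, by induction on $k$, that the updates \eqref{eq:ADMM3}--\eqref{eq:ADMM33} run with $\rho=\rho_0$ reproduce \eqref{eq:ADMM1}--\eqref{eq:ADMM13} run with $\rho=2\rho_0$, one edge at a time. Since both schemes use the same symbols, I would mark the iterates of \eqref{eq:ADMM3}--\eqref{eq:ADMM33} with a tilde and propose the correspondence $\tilde{\bx}_i^{(k)}=\bx_i^{(k)}$, $\tilde{\bz}_{st}^{(k)}=\bz_{st}^{(k)}-\bz_{ts}^{(k)}$, and $\tilde{\bu}_{st}^{(k)}=-\bu_{st}^{(k)}$, together with the two structural invariants
\[
\bu_{ts}^{(k)}=-\bu_{st}^{(k)} \qquad\text{and}\qquad \bz_{st}^{(k)}+\bz_{ts}^{(k)}=\bx_s^{(k)}+\bx_t^{(k)},\qquad (s,t)\in\calE_1 .
\]
The single algebraic fact behind everything is the parallelogram-type identity $\|\ba\|^2+\|\bb\|^2=\half\|\ba-\bb\|^2+\half\|\ba+\bb\|^2$, which lets one split each pair of vertex-copy penalties in \eqref{eq:lagrangian1} into a ``difference'' part and a ``sum'' part; the factor $\half$ here is exactly what will convert $2\rho_0$ into $\rho_0$.

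First I would verify that the two invariants are self-propagating under \eqref{eq:ADMM1}--\eqref{eq:ADMM13}. The per-edge $z$-subproblem separates, and rewriting $\lambda\|\bz_{st}-\bz_{ts}\|+\tfrac{\rho}{2}\|\bx_s-\bz_{st}\|^2+\tfrac{\rho}{2}\|\bx_t-\bz_{ts}\|^2-\bu_{st}^T\bz_{st}-\bu_{ts}^T\bz_{ts}$ in sum/difference coordinates shows that the optimal mean $\half(\bz_{st}+\bz_{ts})$ equals $\half(\bx_s+\bx_t)$ precisely when $\bu_{st}+\bu_{ts}=0$; substituting this minimizer into the dual update \eqref{eq:ADMM13} then returns $\bu_{st}^{(k+1)}+\bu_{ts}^{(k+1)}=0$, so both invariants hold for all $k\ge1$. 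After translation, $\tilde{\bz}_{st}=\bz_{st}-\bz_{ts}$ is exactly the minimizer of $\lambda\|\tilde{\bz}_{st}\|+\tfrac{\rho_0}{2}\|\tilde{\bz}_{st}-\bx_s+\bx_t+\tilde{\bu}_{st}/\rho_0\|^2$, i.e. the $z$-step of \eqref{eq:ADMM3}--\eqref{eq:ADMM33}; the thresholds agree because $2\lambda/(2\rho_0)=\lambda/\rho_0$, and the relation $\tilde{\bu}_{st}=-\bu_{st}$ is preserved by \eqref{eq:ADMM13} and \eqref{eq:ADMM33} by a direct comparison of the two dual recursions. One either assumes the initialization already satisfies the dictionary and the invariants, or begins the comparison at $k=1$, where they hold automatically.

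The crux is matching the $x$-updates. Collecting the $x$-dependent terms of $\hat{L}_{2\rho_0}$ for a fixed edge $(s,t)\in\calE_1$ and using $\bu_{ts}=-\bu_{st}$ yields the linear term $\bu_{st}^T(\bx_s-\bx_t)$ plus $\rho_0\|\bx_s-\bz_{st}^{(k)}\|^2+\rho_0\|\bx_t-\bz_{ts}^{(k)}\|^2$. Applying the parallelogram identity to these two squares and then invoking the invariant $\bz_{st}^{(k)}+\bz_{ts}^{(k)}=\bx_s^{(k)}+\bx_t^{(k)}$ rewrites them exactly as $\tfrac{\rho_0}{2}\|(\bx_s-\bx_t)-\tilde{\bz}_{st}^{(k)}\|^2+\tfrac{\rho_0}{2}\|\bx_s+\bx_t-\bx_s^{(k)}-\bx_t^{(k)}\|^2$, which are precisely the edge-$(s,t)$ contribution of $\tilde{L}_{\rho_0}$ plus the preconditioning term in \eqref{eq:ADMM3} (the linear parts reconcile after substituting $\tilde{\bu}_{st}=-\bu_{st}$). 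Because the $\sum_i f_i$ and $\lambda\sum_{\calE_0}\|\bx_s-\bx_t\|$ contributions are literally identical in the two Lagrangians, the two $x$-objectives coincide up to an $x$-independent constant and hence share the minimizer $x^{(k+1)}$. This is where the preconditioner and the factor of two are forced: the preconditioner absorbs the ``sum'' half of the two copy penalties, and the identity's $\half$ halves the quadratic weight.

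The main obstacle, and the only place needing genuine care, is this $x$-update bookkeeping. The two separate quadratics in \eqref{eq:lagrangian1} depend on the copies $\bz_{st}^{(k)},\bz_{ts}^{(k)}$ individually, whereas \eqref{eq:lagrangian3} sees only their difference and the preconditioner sees only $\bx_s^{(k)}+\bx_t^{(k)}$; so one must be sure the ``sum'' invariant $\bz_{st}^{(k)}+\bz_{ts}^{(k)}=\bx_s^{(k)}+\bx_t^{(k)}$ is in force at exactly the iterate consumed by the $x$-step, not merely the difference relation. Once both invariants are established and carried along the induction, the $x$-, $z$-, and dual-steps of the two schemes agree term by term, and the update formulas generate identical $\{\bx_i^{(k)}\}$ (and, through the dictionary, matching $z$ and $u$ iterates), which is the asserted equivalence.
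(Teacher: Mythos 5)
Your proof is correct, and it reaches the lemma by a different route than the paper. The paper's proof is a two-step modular reduction: it introduces the intermediate problem \eqref{eq:proposed2}, in which the difference $\bz_{st}=\bx_s-\bx_t$ and the sum $\bz_{ts}=\bx_s+\bx_t$ are both explicitly constrained variables, asserts without detail that standard ADMM on that problem with parameter $\rho_0$ reproduces \eqref{eq:ADMM1}--\eqref{eq:ADMM13} with $2\rho_0$, and then invokes the general Lemma~\ref{lemma:augmented}, which identifies standard ADMM on a problem augmented by constraints $\bC_1\bx=\bz$, $\bC_2\by=\bw$ with preconditioned ADMM. Your induction performs both steps at once by hand: the parallelogram identity is exactly the unwritten ``it can be verified'' computation in the paper, and your two invariants $\bu_{st}^{(k)}+\bu_{ts}^{(k)}=0$ and $\bz_{st}^{(k)}+\bz_{ts}^{(k)}=\bx_s^{(k)}+\bx_t^{(k)}$ are precisely the specialization to this setting of the two facts ($\bv_1^{(k)}=0$ and $\bz^{(k)}=\bC_1^{0.5}\bx^{(k)}$) on which the paper's proof of Lemma~\ref{lemma:augmented} rests. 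What your route buys is a self-contained, elementary argument that makes visible where the factor of two and the preconditioning term come from; what the paper's route buys is modularity, since Lemma~\ref{lemma:augmented} applies to any preconditioner of the form $\bC^T\bC$ and can be reused, at the price of leaving the sum/difference change of variables to the reader. The initialization caveat you flag is real but harmless: the dual-sum invariant becomes automatic only after the first dual update and the $z$-sum invariant one step later, so the iterate-by-iterate equivalence should be asserted either for consistently chosen initializations or from the second iteration onward --- which is the same implicit convention the paper adopts.
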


\subsection{Implementation and its computational cost per iteration}\label{sec:implementation}
%However, by the previous argument, this update formula is equivalent to \eqref{eq:ADMM1}.
Based on the update formulas \eqref{eq:ADMM3}-\eqref{eq:ADMM33}, the implementation of our proposed algorithm is  described as Algorithm~\ref{alg:ADMM3}.

As this algorithm depends on the graph decomposition $\calE_0\cup\calE_1$, in practice, we use a greedy algorithm to find $\calE_0$ as follows: First, label all edges in some arbitrary order and $\calE_0$ be an empty set. Second, cycle once through each edge and add it to $\calE_0$ if it is not  neighboring any existing edges in $\calE_0$. In fact, $\calE_0$ is called matching in graph theory and there are numerous algorithms for finding a matching within a graph.

\begin{algorithm}
\caption{The implementation of the ADMM method in \eqref{eq:ADMM3}.}\label{alg:ADMM3}
{\bf Input:}  Graph $(\calV,\calE)$ and its partition $\calE=\calE_0\cup\calE_1$ ($\calE_0$ has not neighboring edges); loss functions $\{f_i\}_{i\in\calV}$; parameters $\rho$ and $\lambda$. \\
{\bf Initialization:}  Initialize $\{\bx^{(0)}_i\}, \{\bz^{(0)}_i\}_{i\in\calV}\subset\reals^p$, $\{\bu^{(0)}_{st}\}_{(s,t)\in\calE_1}\subset\reals^p$.\\
{\bf Loop:} Iterate Steps 1--4 until convergence:\\
{\bf \it{1}:} For any $(s,t)\in\calE_0$, $(\bx_s^{(k+1)},\bx_t^{(k+1)})=\argmin_{\bx_s,\bx_t}f_s(\bx_s)+f_t(\bx_t)+\lambda\|\bx_s-\bx_t\|+\rho(d_s\|\bx_s\|^2+d_t\|\bx_t\|^2)+\bx_s^T\bt_s^{(k)}+\bx_t^T\bt_t^{(k)}$, where \[\bt_i^{(k)}=\sum_{j:(i,j)\in\calE_1}[-(\bu_{ij}^{(k)}+\rho\bz_{ij}^{(k)})-\rho(\bx_i^{(k)}+\bx_j^{(k)})]+\sum_{j:(j,i)\in\calE_1}[(\bu_{ji}^{(k)}+\rho\bz_{ji}^{(k)})-\rho(\bx_i^{(k)}+\bx_j^{(k)})]\]
{\bf \it{2}:}For any $i\in\calV$ and does not belong to any edges in $\calE_0$, $\bx_i^{(k+1)}=\argmin_{\bx_i}f_i(\bx_i)+\rho d_i\|\bx_i\|^2+\bx_i^T\bt_i^{(k)}$.\\
{\bf \it{3}:} For any $(s,t)\in\calE_1$, $\bz_{st}^{(k+1)}=\argmin_{\bz_{st}}\frac{\rho}{2}\|\bz_{st}\|^2+\bz_{st}(\bu_{st}^{(k)}-\rho\bx_s^{(k+1)}+\rho\bx_t^{(k+1)})+\lambda\|\bz_{st}\|=\mathrm{threshold}(\bx_s^{(k+1)}-\bx_t^{(k+1)}-\frac{\bu_{st}^{(k)}}{\rho},\frac{\lambda}{\rho})$.\\
{\bf \it{4}:} For any $(s,t)\in\calE_1$,  $\bu_{st}^{(k+1)}=\bu_{st}^{(k)}+\rho(\bz_{st}^{(k+1)}-\bx_s^{(k+1)}+\bx_t^{(k+1)})$. \\
{\bf Output:} The solution to \eqref{eq:problem0}, $\hat{\bx}_i=\lim_{k\rightarrow\infty}\bx_i^{(k)}$ for all $i\in\calV$.
\vspace{0.3cm}
\end{algorithm}

To compare the computational cost per iteration Algorithm~\ref{alg:ADMM3} and the network lasso, we investigate a commonly used special case that $f_i(\bx_i)=\|\bx_i-\by_i\|^2$. Then step 1 in Algorithm~\ref{alg:ADMM3} requires the following Lemma~\ref{lemma:step1}. We skip its proof since it is relatively straightforward and has been discussed in works such as \citep{Hallac2015}.
\begin{lemma}\label{lemma:step1}For any $\ba,\bb\in\reals^p$,
\begin{align*}&\argmin_{\bx,\by\in\reals^p}c_1\|\bx-\ba\|^2+c_2\|\by-\bb\|^2+\lambda\|\bx-\by\|\\=&\begin{cases}(\frac{c_1\ba+c_2\bb}{c_1+c_2},\frac{c_1\ba+c_2\bb}{c_1+c_2}),\,\,\text{if $2c_1c_2\|\ba-\bb\|\leq (c_1+c_2)\lambda$}\\(\ba-\frac{\lambda}{2c_1}\frac{\ba-\bb}{\|\ba-\bb\|},\bb-\frac{\lambda}{2c_2}\frac{\bb-\ba}{\|\bb-\ba\|}),\,\,\text{otherwise}.\end{cases}\end{align*}
\end{lemma}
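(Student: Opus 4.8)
The plan is to exploit the separable-plus-coupling structure of the objective by \emph{partial minimization}: first minimize over $\bx,\by$ while holding the difference $\bx-\by$ fixed, and only then optimize over that difference. Since $c_1,c_2>0$, the objective $g(\bx,\by)=c_1\|\bx-\ba\|^2+c_2\|\by-\bb\|^2+\lambda\|\bx-\by\|$ is strictly convex and coercive, so it has a unique minimizer and the two-stage minimization is legitimate. (A direct alternative would be to write the subgradient optimality condition $\zerovct\in\partial g$ and split into the cases $\bx=\by$ and $\bx\neq\by$; this also works but is messier because of the subdifferential of $\|\cdot\|$ at coincident points, so I prefer the reduction below.)

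First I would fix $\bd=\bx-\by$ and minimize the smooth quadratic part $c_1\|\bx-\ba\|^2+c_2\|\by-\bb\|^2$ over all $(\bx,\by)$ with $\bx-\by=\bd$. Substituting $\by=\bx-\bd$ and setting the gradient in $\bx$ to zero gives the closed forms $\bx=\frac{c_1\ba+c_2\bb+c_2\bd}{c_1+c_2}$ and $\by=\bx-\bd$, from which $\bx-\ba=\frac{c_2(\bd-(\ba-\bb))}{c_1+c_2}$ and $\by-\bb=\frac{c_1((\ba-\bb)-\bd)}{c_1+c_2}$. Plugging these back and using $\frac{c_1c_2^2+c_1^2c_2}{(c_1+c_2)^2}=\frac{c_1c_2}{c_1+c_2}$ collapses the quadratic part to $\frac{c_1c_2}{c_1+c_2}\|\bd-(\ba-\bb)\|^2$. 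Hence the original problem is equivalent to the reduced single-variable problem
\[
\min_{\bd\in\reals^p}\ \frac{c_1c_2}{c_1+c_2}\,\|\bd-(\ba-\bb)\|^2+\lambda\|\bd\|.
\]

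Next I would recognize the reduced problem as a (scaled) proximal operator of the Euclidean norm, whose minimizer is the vector soft-threshold of $\bv=\ba-\bb$. Writing $C=\frac{c_1c_2}{c_1+c_2}$, the inclusion $\zerovct\in 2C(\bd-\bv)+\lambda\,\partial\|\bd\|$ has the solution $\bd=\zerovct$ exactly when $2C\|\bv\|\leq\lambda$, which is precisely $2c_1c_2\|\ba-\bb\|\leq(c_1+c_2)\lambda$; otherwise $\bd=\bigl(1-\tfrac{\lambda}{2C\|\bv\|}\bigr)\bv$, aligned with $\bv$. Back-substituting $\bd=\zerovct$ into the formulas for $\bx,\by$ yields $\bx=\by=\frac{c_1\ba+c_2\bb}{c_1+c_2}$, and back-substituting the nonzero $\bd$ reproduces $\bx=\ba-\frac{\lambda}{2c_1}\frac{\ba-\bb}{\|\ba-\bb\|}$ and $\by=\bb-\frac{\lambda}{2c_2}\frac{\bb-\ba}{\|\bb-\ba\|}$, matching the two branches of the claim.

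The computations above are all elementary; the only point demanding care is the nonsmooth term $\|\bd\|$ at the origin, i.e. correctly reading off the threshold from the inclusion $\zerovct\in 2C(\bd-\bv)+\lambda\,\partial\|\bd\|$ (with $\partial\|\zerovct\|$ the closed unit ball) and checking that the regimes $2c_1c_2\|\ba-\bb\|\leq(c_1+c_2)\lambda$ and its complement partition the parameter space so that exactly one branch of the stated formula applies. Everything else is completing the square and a direct substitution.
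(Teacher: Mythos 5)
Your proof is correct; note, however, that the paper itself does not prove Lemma~2.2 at all --- it explicitly skips the proof, deferring to prior work such as Hallac et al.\ (2015) --- so your argument supplies an omitted derivation rather than paralleling an in-paper one. The two-stage reduction is sound: minimizing the quadratic part over the fiber $\{(\bx,\by):\bx-\by=\vct{d}\}$ gives $\bx=\frac{c_1\ba+c_2\bb+c_2\vct{d}}{c_1+c_2}$, collapses the quadratic to $\frac{c_1c_2}{c_1+c_2}\|\vct{d}-(\ba-\bb)\|^2$, and the reduced problem is exactly the proximal operator of $\lambda\|\cdot\|$ at $\ba-\bb$, i.e.\ block soft-thresholding with threshold $\frac{(c_1+c_2)\lambda}{2c_1c_2}$ on $\|\ba-\bb\|$; back-substitution reproduces both branches of the stated formula, and at the boundary $2c_1c_2\|\ba-\bb\|=(c_1+c_2)\lambda$ the two branches coincide, so the case split is unambiguous. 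Two small points worth making explicit in a final write-up: (i) the argument uses $c_1,c_2>0$ (true in the paper's application, where these constants come from $\rho$ times vertex degrees) --- both the strict convexity claim and the interchange $\min_{\bx,\by}=\min_{\vct{d}}\,\min_{\bx-\by=\vct{d}}$ rely on it; and (ii) when $\ba=\bb$ the second branch's formula is undefined, but your reduction handles this automatically, since then $\vct{d}=\zerovct$ and only the first case applies.
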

%\begin{proof}
%Let $(\hat{\bx}, \hat{\by})$ be the solution. If $\hat{x}=\hat{y}$, then the minimizer is $\hat{x}=\hat{y}=\frac{c_1a+c_2b}{c_1+c_2}$.
%
%If $\hat{x}\neq \hat{y}$, then the minimizer satisfies $2c_1(\hat{x}-a)+\lambda=0$ and $\hat{x}=\frac{2c_1a-\lambda}{2c_1}$ and similarly,  $\hat{y}=\frac{2c_2b+\lambda}{2c_2}$. So if $2c_2(2c_1a-\lambda)>2c_1(2c_2b+\lambda)$, i.e., $2c_1c_2(a-b)>(c_1+c_2)\lambda$, this is the solution.
%
%If $-2c_1c_2(a-b)>(c_1+c_2)\lambda$, then $\hat{x}=\frac{2c_1a+\lambda}{2c_1}$ and similarly,  $\hat{y}=\frac{2c_2b-\lambda}{2c_2}$.
%\end{proof}

Now let us investigate the computational complexity per iteration of Algorithm~\ref{alg:ADMM3}, by keep track of the multiplications of a scalar and a vector of $\reals^p$ (denoted as multiplications) and the additions of two vectors of $\reals^p$ (denoted as additions). In particular, the calculation of $\bt_s/(1+\rho_s)$ in the update of $x$ requires $2|\calE_1|+n$ multiplications and $2|\calE_1|+n$ additions between, and  step 1 requires an additional cost of at most $2|\calE_0|$ multiplications and $2|\calE_0|$ additions, and $|\calE_0|$ operations of finding the norm of a vector of length $p$ and $|\calE_0|$ operations of comparing two scalars. Step 3 requires $3|\calE_1|$ additions, $|\calE_1|$ multiplications and $|\calE_1|$ comparisons. Step 4 requires $3|\calE_1|$ additions and $|\calE_1|$ multiplications.

Note that the network lasso algorithm is equivalent to the case where $\calE_0=\emptyset$ and $\calE_1=\calE$, we may compare the computational cost per iteration between Algorithm~\ref{alg:ADMM3} and the network lasso algorithm, and it is clear that Algorithms~\ref{alg:ADMM3} has a smaller computational cost per iteration compared to network lasso.

\subsection{Convergence Rate}\label{sec:convergence}
Now let us investigate the theoretical convergence rate. First, we introduce a general theory on the local convergence of ADMM. Its proof is deferred to Section~\ref{sec:proof}.
\begin{thm}[Local Convergence Rate of ADMM]\label{thm:convergence}
Considering the problem of minimizing $f_1(\bx_1)+f_2(\bx_2)$ subject to $\bA_1\bx_1+\bA_2\bx_2=\bb$. Assuming that around the solution $(\bx^*,\by^*)$, $\partial f_1(\bx) = \bC_1\bx+\bc_1$ and $\partial f_2(\bx) = \bC_2\bx+\bc_2$, then the local convergence rate of the ADMM algorithm is  $O(c(\rho)^k)$, where $k$ is the number of iterations and $c(\rho)$ is the largest real components among all eigenvalue of
\[
\frac{1}{2}[(\bI-2(\bI+\rho \bA_2\bC_2^{-1}\bA_2^T)^{-1})(\bI-2(\bI+\rho \bA_1\bC_1^{-1}\bA_1^T)^{-1})+\bI].
\]
%For the augmented version with an additional constraint that $\bB_1\bx_1=\bz$, it is
%\[
%\frac{1}{2}[\diag(\bI-2(\bI+\rho \bA_2\bC_2^{-1}\bA_2^T)^{-1},\bI)(\bI-2(\bI+\rho [\bA_1;\bB_1]\bC_1^{-1}[\bA_1;\bB_1]^T)^{-1})+\bI].
%t\]
\end{thm}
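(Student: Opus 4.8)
The plan is to use the affine-subdifferential hypothesis to turn the ADMM recursion into an \emph{exact} linear map in a neighborhood of the optimum, and then read off the asymptotic rate from the spectrum of its linear part. First I would write the three optimality conditions of the ADMM steps for the generic problem $\min f_1(\bx_1)+f_2(\bx_2)$ subject to $\bA_1\bx_1+\bA_2\bx_2=\bb$: the $\bx_1$-update gives $0\in\partial f_1(\bx_1^{(k+1)})+\bA_1^{T}\bu^{(k)}+\rho\bA_1^{T}(\bA_1\bx_1^{(k+1)}+\bA_2\bx_2^{(k)}-\bb)$, the $\bx_2$-update gives the analogous inclusion with $\bx_2^{(k+1)}$ in the quadratic term, and the dual update is $\bu^{(k+1)}=\bu^{(k)}+\rho(\bA_1\bx_1^{(k+1)}+\bA_2\bx_2^{(k+1)}-\bb)$. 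Substituting $\partial f_i(\bx)=\bC_i\bx+\bc_i$, subtracting the corresponding fixed-point identities (and using $\bA_1\bx_1^{*}+\bA_2\bx_2^{*}=\bb$), and writing $\delta_i^{(k)}=\bx_i^{(k)}-\bx_i^{*}$, $\eta^{(k)}=\bu^{(k)}-\bu^{*}$ for the deviations, the recursion becomes exactly linear:
\begin{align*}
(\bC_1+\rho\bA_1^{T}\bA_1)\,\delta_1^{(k+1)} &= -\bA_1^{T}\bigl(\eta^{(k)}+\rho\bA_2\delta_2^{(k)}\bigr),\\
(\bC_2+\rho\bA_2^{T}\bA_2)\,\delta_2^{(k+1)} &= -\bA_2^{T}\bigl(\eta^{(k)}+\rho\bA_1\delta_1^{(k+1)}\bigr),\\
\eta^{(k+1)} &= \eta^{(k)}+\rho\bigl(\bA_1\delta_1^{(k+1)}+\bA_2\delta_2^{(k+1)}\bigr).
\end{align*}

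The next step is to eliminate the primal deviations and pass to the constraint space. Multiplying the first line by $\rho\bA_1$ and applying the push-through identity $\rho\bA_i(\bC_i+\rho\bA_i^{T}\bA_i)^{-1}\bA_i^{T}=\bI-(\bI+\rho\bP_i)^{-1}$, where $\bP_i:=\bA_i\bC_i^{-1}\bA_i^{T}$, converts each primal solve into the resolvent $\mathbf{J}_i:=(\bI+\rho\bP_i)^{-1}$ that appears in the statement. Introducing $\bR_i:=\bI-2\mathbf{J}_i=\bI-2(\bI+\rho\bP_i)^{-1}$ (exactly the factors in the theorem), I would combine the $\bx_2$- and dual-updates into $\eta^{(k+1)}=\mathbf{J}_2\bigl(\eta^{(k)}+\rho\bA_1\delta_1^{(k+1)}\bigr)$, and rewrite the $\bx_1$-update as $\eta^{(k)}+\rho\bA_1\delta_1^{(k+1)}=\mathbf{J}_1\bigl(\eta^{(k)}+\rho\bA_2\delta_2^{(k)}\bigr)-\rho\bA_2\delta_2^{(k)}$. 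These two identities show the dynamics close on the single constraint-space quantity $\bg^{(k)}:=\eta^{(k)}+\rho\bA_2\delta_2^{(k)}$ together with its companion $\rho\bA_2\delta_2^{(k)}$, and a short companion-elimination collapses the whole system to a one-variable reflected-residual iteration $\bw^{(k+1)}=\bM\bw^{(k)}$ with $\bM=\tfrac12(\bI+\bR_1\bR_2)$ — the Douglas–Rachford form of ADMM.

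With the iteration matrix in hand, the conclusion follows from two observations. Since $\bg^{(k)}$, $\eta^{(k)}$ and hence all $\delta_i^{(k)}$ are fixed linear images of $\bw^{(k)}$, the rate at which $\bw^{(k)}\to\zerovct$ transfers to $\bx_i^{(k)}\to\bx_i^{*}$, so the asymptotic per-iteration factor is governed by the eigenvalues of $\bM$. Moreover, because $\bP_i$ (and thus $\bR_i$) are symmetric and $\bR_1\bR_2$, $\bR_2\bR_1$ always share the same eigenvalues, the eigenvalues of $\bM=\tfrac12(\bI+\bR_1\bR_2)$ coincide with those of $\tfrac12\bigl[(\bI-2\mathbf{J}_2)(\bI-2\mathbf{J}_1)+\bI\bigr]$, which is precisely the matrix displayed in the statement. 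Reading the limiting contraction factor off this spectrum yields the claimed $O(c(\rho)^{k})$ bound.

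The main obstacle I anticipate is not the algebra of the elimination but the precise identification of $c(\rho)$ as the \emph{largest real part} of the spectrum rather than the spectral radius: for a generic linear recursion the rate is $\max_i|\lambda_i(\bM)|$, and since $\bR_1\bR_2$ is a product of two indefinite symmetric matrices its eigenvalues (hence those of $\bM$) may be complex, for which modulus and real part differ. Making the stated characterization rigorous requires controlling the eigenstructure of $\bM$ near the solution — for instance arguing that the dominant mode is real, or reinterpreting the bound through the averaged/firmly-nonexpansive structure of the Douglas–Rachford operator $\bM$ — and this is the step I expect to demand the most care.
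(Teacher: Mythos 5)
Your proposal is correct, and it reaches the paper's iteration matrix by a genuinely different, more self-contained route. The paper's proof is essentially a citation plus a linearization: it invokes the classical fact that ADMM is the Douglas--Rachford splitting method applied to the dual problem $\max_{\bz}-\bb^T\bz-f_1^*(-\bA_1^T\bz)-f_2^*(-\bA_2^T\bz)$, and then observes that under the affine-subdifferential hypothesis the two proximal maps become the affine resolvents $(\bI+\rho\bA_i\bC_i^{-1}\bA_i^T)^{-1}$, so each DR step is an affine map whose linear part is the displayed matrix. You instead linearize the three ADMM optimality conditions directly, subtract the fixed-point identities, and eliminate the primal deviations via the push-through identity $\rho\bA_i(\bC_i+\rho\bA_i^T\bA_i)^{-1}\bA_i^T=\bI-(\bI+\rho\bA_i\bC_i^{-1}\bA_i^T)^{-1}$ --- in effect re-deriving the ADMM/DR equivalence in the affine case rather than citing it. Your elimination is sound: with $h^{(k)}=\eta^{(k)}+\rho\bA_1\delta_1^{(k+1)}$ one gets exactly $h^{(k+1)}=\tfrac12(\bI+\bR_1\bR_2)h^{(k)}$, every deviation $\delta_1^{(k)},\delta_2^{(k)},\eta^{(k)}$ is a fixed linear image of $h$ at an earlier iterate, and your remark that $\bR_1\bR_2$ and $\bR_2\bR_1$ have the same characteristic polynomial correctly reconciles your ordering of the factors with the paper's display. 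The paper's route buys brevity and a pointer to the literature; yours buys an elementary, verifiable derivation that also makes explicit where invertibility of $\bC_i$ is needed.

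Your final concern is well placed, and you should know the paper does not resolve it either: its proof ends by asserting that the iterates "converge in the order of" the $k$-th power of the iteration matrix, and never justifies reading the rate off the \emph{largest real part} of the spectrum rather than the spectral radius $\max_i\abs{\lambda_i}$, which is what governs a generic affine iteration. The averaged structure you point to gives the only available bridge: when $\bC_i\psdge 0$ the matrices $\bR_i$ are nonexpansive, $\bM=\tfrac12(\bI+\bR_1\bR_2)$ is firmly nonexpansive, and every eigenvalue then satisfies $\abs{\lambda}^2\le\real\lambda$, so that $c(\rho)\le\max_i\abs{\lambda_i}\le\sqrt{c(\rho)}$; but this still does not make the two quantities equal unless the dominant eigenvalue is real. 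So the identification of the rate with $c(\rho)$ is an imprecision in the statement itself, shared by the paper's own proof; your write-up is no weaker than the paper on this point and is more honest in flagging it.
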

Considering that Algorithm~\ref{alg:ADMM3} is obtained from solving \eqref{eq:proposed1}, the convergence rate of  Algorithm~\ref{alg:ADMM3} follows from this theorem with
\[
f_1(\{\bx_i\}_{i\in\calV})=\sum_{i\in\calV}f_i(\bx_i)+\lambda\sum_{(s,t)\in\calE_0}\|\bx_s-\bx_t\|,\,\,f_2(\{\bz_{st}\}_{(s,t)\in\calE_1})=\lambda\sum_{(s,t)\in\calE_1}\|\bz_{st}-\bz_{ts}\|.
\]
That is, $\bx_1$ in Theorem~\ref{thm:convergence} is replaced by $\{\bx_i\}_{i\in\calG}$,  $\bx_2$ in Theorem~\ref{thm:convergence} is replaced by $\{\bz_{st}\}_{(s,t)\in\calE_1}$, and $\bA_1\bx_1+\bA_2\bx_2=\bb$ is replaced by $\bx_s=\bz_{st}$ and $\bx_t=\bz_{ts}$ for all $(s,t)\in\calE_1$.
Therefore, we have $\bA_1\in\reals^{np\times 2p|\calE_1|}$, defined such that $\bA_1(2i-1,s_i)=\bI_{p\times p}$ and $\bA_1(2i,t_i)=\bI_{p\times p}$ if $(s_i,t_i)$ is the $i$-th edge in $\calE_1$, and $\bA_2=-\bI_{2p|\calE_1|\times 2p|\calE_1|}$. The matrix $\bC_1\in\reals^{pn\times pn}$ can be generated by the following three steps. First, the $(i,i)$-th $p\times p$ block is given by
\[
\bC_1(i,i)=\text{Hessian} f_i(\bx_i^*)% + \sum_{j: (i,j), \text{or}\, (j,i)\in\calE_0: \bx_i^*\neq \bx_j^*}\bT(i,j),
\]
Second, for $(i,j)\in\calE_0$ we let  $\bT(i,j)=\frac{1}{\|\bx_i^*-\bx_j^*\|}\bI-\frac{1}{\|\bx_i^*-\bx_j^*\|^3}(\bx_i^*-\bx_j^*)(\bx_i^*-\bx_j^*)^T$ if  $ \bx_i^*\neq \bx_j^*$, and $\bT(i,j)=\infty\bI$ if $\bx_i^*= \bx_j^*$. Third, we update the $(i,i), (i,j), (j,i), (j,j)$-th $p\times p$ blocks of $\bC_1$ by
\begin{align*}
\bC_1(i,i)\leftarrow \bC_1(i,i)+\bT(i,j), \,\,\,\,\,\,\bC_1(j,j)\leftarrow \bC_1(j,j)+\bT(i,j), \\\bC_1(i,j)\leftarrow \bC_1(i,j)-\bT(i,j), \,\,\,\,\,\,\bC_1(j,i)\leftarrow \bC_1(j,i)-\bT(i,j).\end{align*} %Second,  then the $(i,j)$-th $p\times p$ block is given by $-\bT(i,j)$. Third, for any $(i,j)\in\calE_0$ such that  $\bx_i^*= \bx_j^*$, replace $(i,i), (i,j), (j,i), (j,j)$-th $p\times p$ blocks of $\bC_1$ by their average.

The matrix $\bC_2\in\reals^{2p|\calE_1|\times 2p|\calE_1|}$ is generated as follows: if the $i$-th edge in $\calE_1$, $(s_i,t_i)$, satisfies that $\bx_{s_i}^*\neq \bx_{t_i}^*$, then the $(i,i)$-th $2p\times 2p$ block of $\bC_2$ is given by $[\bT(s_i,t_i),-\bT(s_i,t_i);-\bT(s_i,t_i),\bT(s_i,t_i)]$. The remaining $2p\times 2p$ blocks are all zero matrices.

Note that the network lasso algorithm is equivalent to Algorithm~\ref{alg:ADMM3} with $\calE_0=\emptyset$ and $\calE_1=\calE$, this result can also be applied to analyze the convergence rate of the network lasso algorithm.

While it is difficult to compare their convergence rates in general due to the complexities of $\bA_i$ and $\bC_i$, we can calculate the convergence rate numerically for some specific examples. Here we assume that $\calG$ is the one-dimensional chain graph defined by $\calV=\{1,2,\cdots,n\}$ and $\calE=\{(1,2),(2,3),\cdots,(n-1,n)\}$, and the partition such that $\calE_0=\{(1,2),(3,4),\cdots\}$ and $\calE_1=\{(2,3),(4,5),\cdots\}$.  We first compare the theoretical convergence rate  (measured by $c(\rho)$ in Theorem~\ref{thm:convergence}) of Algorithm~\ref{alg:ADMM3} and the network lasso $c(\rho)$  for the following three settings:
\begin{enumerate}
\item $\bx_i^*\in\reals^2$, $\hat{\bx}_i\neq \hat{\bx}_{i+1}$  when $i=50$, $\lambda=1$.
\item $\bx_i^*\in\reals^2$, $\hat{\bx}_i\neq \hat{\bx}_{i+1}$ when $i=10,20,\cdots, 90$, $\lambda=1$.
\item $\bx_i^*\in\reals^2$, $\hat{\bx}_i\neq \hat{\bx}_{i+1}$ when $i=2,4,\cdots, 98$, $\lambda=1$.
\end{enumerate}

The comparison of $c(\rho)$ of Algorithm~\ref{alg:ADMM3} and network lasso is visualized in Figure~\ref{fig:compare_local}. We can see that  Algorithm~\ref{alg:ADMM3} consistently has a smaller $c(\rho)$, which implies a faster convergence rate.% The comparison with the standard lasso is inconclusive, however, we note that  Algorithm~\ref{alg:ADMM3} has a smaller computational complexity per iteration.

\begin{figure}
\centering
\includegraphics[width=0.32\textwidth]{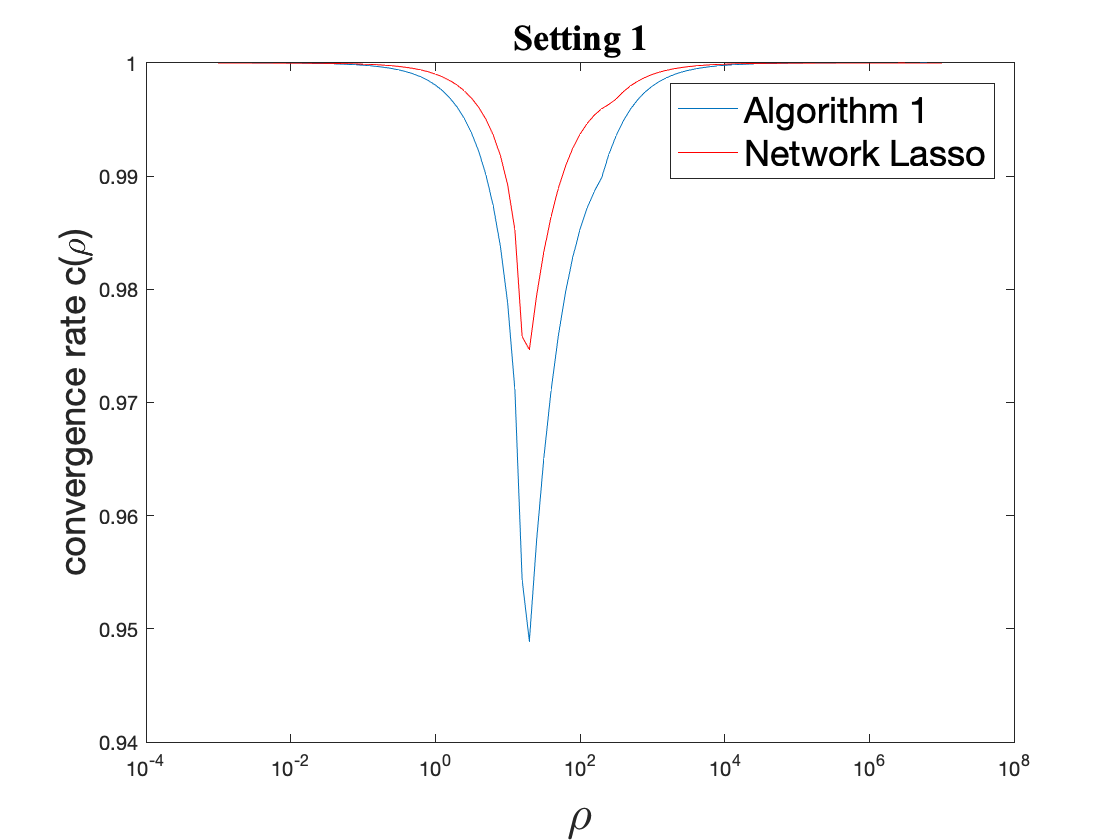}
\includegraphics[width=0.32\textwidth]{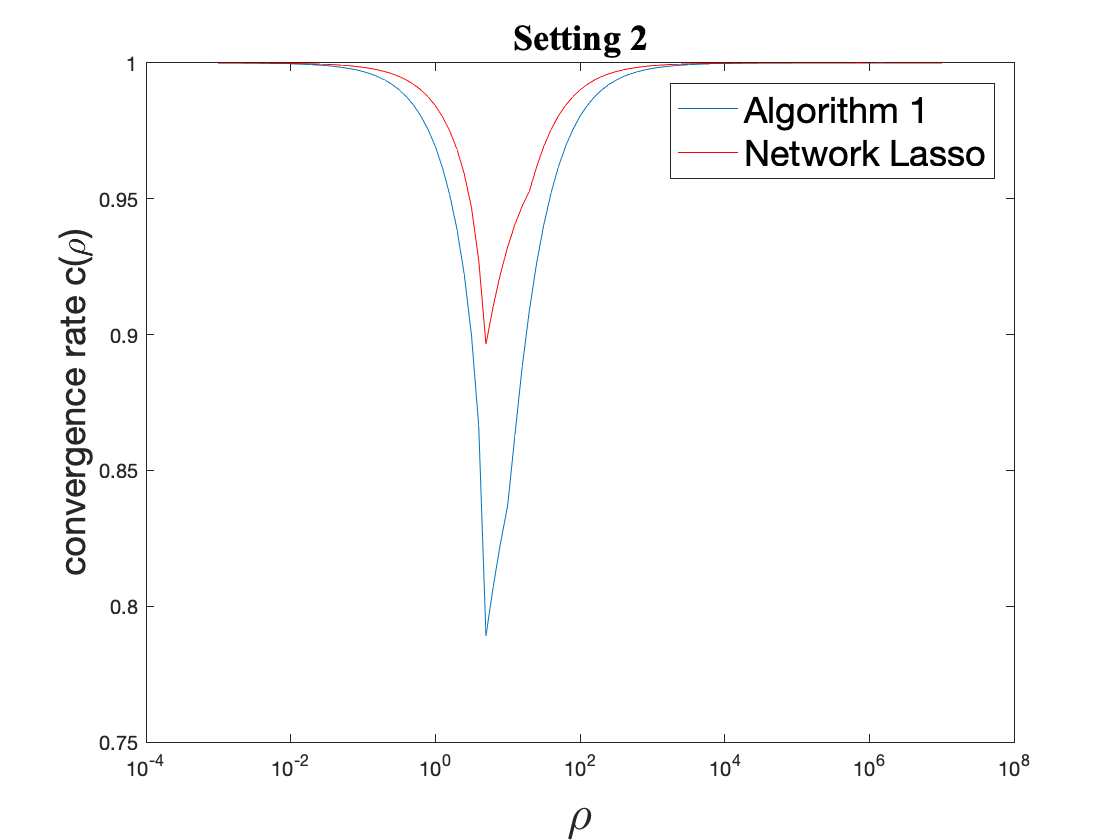}
\includegraphics[width=0.32\textwidth]{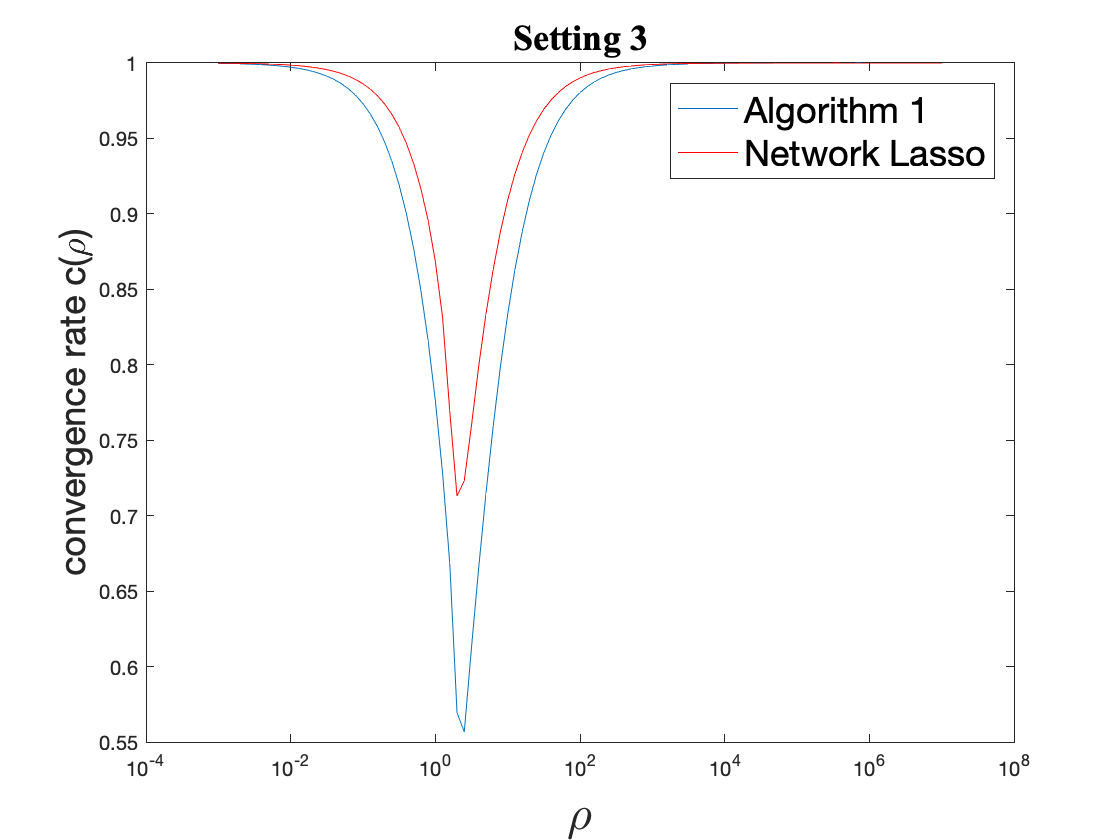}
\caption{Comparison of the theoretical local convergence rates between Algorithm~\ref{alg:ADMM3}, the network lasso, and the standard lasso.}\label{fig:compare_local}\end{figure}

\subsection{Comparison with other works based on graph decomposition}
 Decomposing a graph into edges or paths is an idea that has been applied in existing works. However, we remark that our approach is different from previous works. For example, \citep{Tansey2015} investigates the idea that decomposes the graph into a set of trails (this idea is also explored by Barbero and Sra \citep{barberoTV14} for the two-dimensional grid graph), and then apply existing algorithms to solve each problem. In particular, it decomposes $\calE$ into $K$ sets $\calE_1\cup\cdots\cup\calE_K$ such that for each $1\leq k\leq K$, $\calE_k$  is a trail. By writing the optimization problem as
 \begin{align*}
 \min_{\bx,\bz}\sum_{i\in\calV}f_i(\bx_i)+\lambda\sum_{1\leq k\leq K}\sum_{(s,t)\in\calE_k}\|\bz_{\calE_k,s}-\bz_{\calE_k,t}\|,\\
\text{s.t. $\bz_{\calE_k,s}=\bx_s$ for all $1\leq k\leq K$ and $s$ in some edge of $\calE_k$,}
 \end{align*}
then the ADMM algorithm can be used to update $\bx$ and $\bz$ alternatively. We remark that there are two main differences: first, their method only works well for the case where $\bx_i$ are scalars (i.e., $p=1$). In comparison, our method can handle the case where $\bx_i\in\reals^p$ with $p>1$. Second and more importantly, the total variation penalty term in their method was not partitioned and it is addressed using the augmented variable $\bz$; while in our case, the total variation penalty term is partitioned and part of the $\ell_1$ penalty was handled through the variable $\bx$. In fact, the idea in  this work can be combined with their idea for the case $p=1$ and the problem could be written as follows: first, we decompose $\calE$ into sets $(\calE^1_1\cup\cdots\cup\calE^1_{K_1})\cup(\calE^0_1\cup\cdots\cup\calE^0_{K_0})$, such that all  $\calE_k^0$ and $\calE_k^1$ are trails, and the trails $\calE^0_1, \cdots, \calE^0_{K_0}$ are disjoint. Then writing the optimization problem as
\begin{align*}
 \min_{\bx,\bz}\left(\sum_{i\in\calV}f_i(\bx_i)+\lambda\sum_{1\leq k\leq K_0}\sum_{(s,t)\in\calE^0_k}\|\bx_s-\bx_t\|\right)+\lambda\sum_{1\leq k\leq K_1}\sum_{(s,t)\in\calE^1_k}\|\bz_{\calE^1_k,s}-\bz_{\calE^1_k,t}\|,\\
\text{s.t. $\bz_{\calE^1_k,s}=\bx_s$ for all $1\leq k\leq K-1$ and $s$ in some edge of $\calE^1_k$.}
 \end{align*}
This formulation would give another algorithm for solving the problem in \citep{Tansey2015}, but we will leave it for possible future investigations.

\section{Experiments}
In this section,  the proposed Algorithm~\ref{alg:ADMM3} will be compared with network lasso for solving \eqref{eq:problem0} under various scenarios. We measure their error at iteration $k$ by the difference between its objective value and the optimal objective value.
We remark that all ADMM algorithms require an augmented Lagrangian parameter $\rho$, and the algorithms would converge slowly when $\rho$ is too large or too small.  While there have been many works on the choice of $\rho$. For example, a simple varying penalty strategy based on residual balancing is suggested in Section 3.4.1 of \citep{Boyd:2011:DOS:2185815.2185816}, and another choice based on the Barzilai-Borwein spectral method for gradient descent is proposed in there do not exists an optimal choice for settings~\citep{Xu2017AdaptiveAW}. Considering that there is no general consensus on the optimal strategy of the choice of $\rho$, we will test the performance of the algorithms on a range of $\rho$ in the following simulations, and we choose the range so that the optimal $\rho$ is inside the chosen range.

\subsection{Simulations}
We first test our result on the one-dimensional chain graph. Following \citep{Zhu2015}, we use the model that 
\[
\by^*_i=\begin{cases}[1,1],\text{if $1\leq i\leq 11$} \\ [-1,1],\text{if $12\leq i\leq 22$}\\ [2,2],\text{if $1\leq 23\leq 33$}\\ [-1,-1],\text{if $34\leq i\leq 44$}\\ [0,0],\text{if $i\geq 45$}\end{cases}.
\] %Here $1$, $-1$, $2$, $-2$ are repeated by $11$ times and $0$ is repeated by $56$ times. 
Then we let $\by_i=\by_i^*+N(\mathbf{0},\bI_{2\times 2})$ and $\lambda =1$ or $10$, and compare Algorithm~\ref{alg:ADMM3} will be compared with network lasso  in Figure~\ref{fig:compare1}  with various choices of $\rho$. The figures indicate that for both choices of $\lambda$, Algorithm~\ref{alg:ADMM3} always performs better with a good choice of $\rho$. In fact, if $\rho$ is chosen to be the optimal values for both algorithms, Algorithm~\ref{alg:ADMM3} converges twice as fast as network lasso.%; and it converges faster than standard ADMM, which has a larger computational complexity per iteration.

\begin{figure}
\centering
\includegraphics[width=0.32\textwidth]{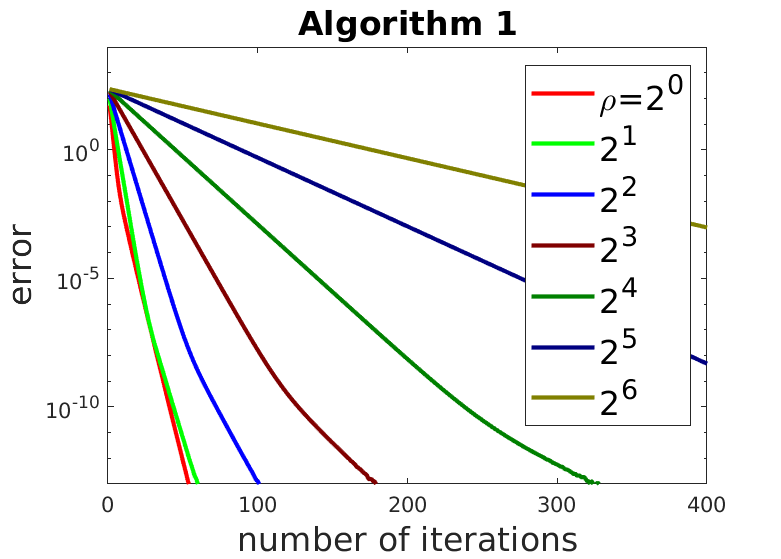}
\includegraphics[width=0.32\textwidth]{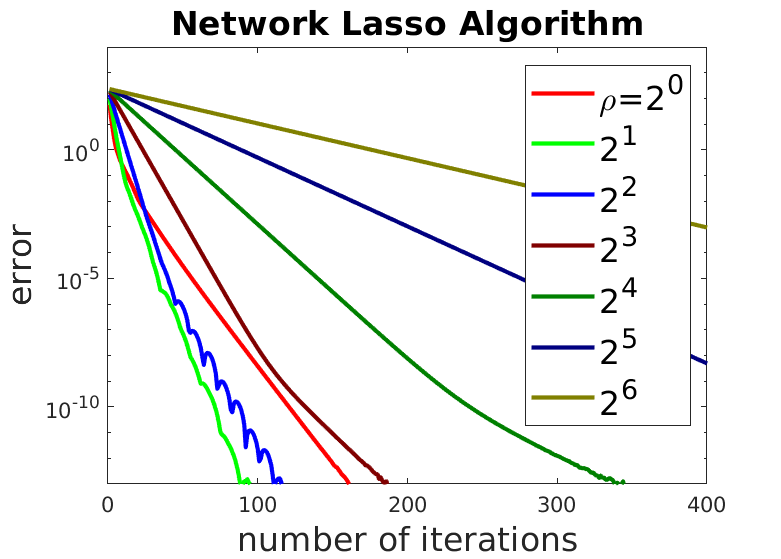}\\
\includegraphics[width=0.32\textwidth]{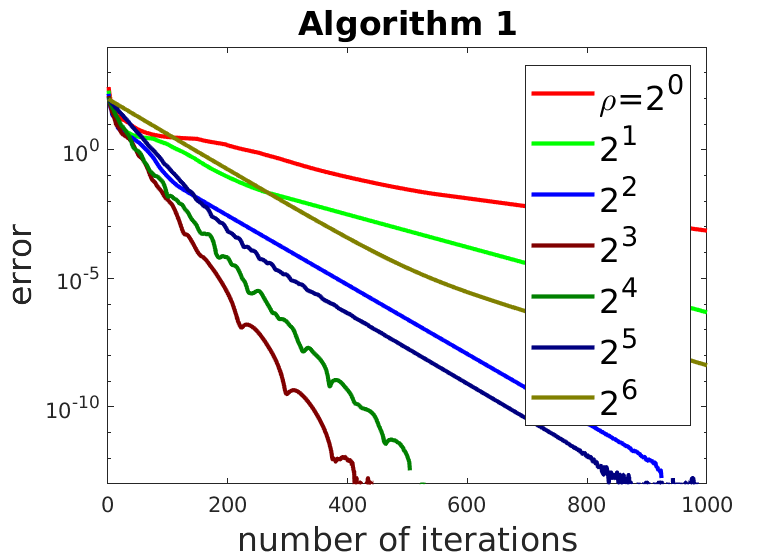}
\includegraphics[width=0.32\textwidth]{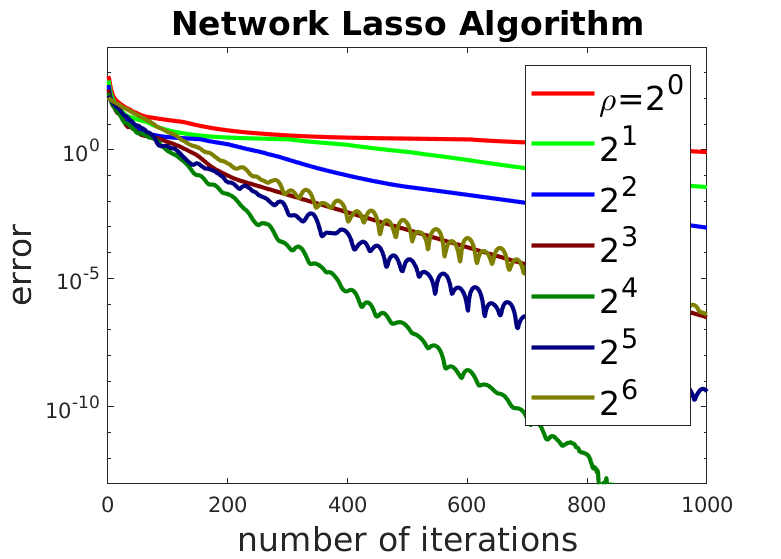}
\caption{Comparison the convergence rates under the 1D chain graph setting with $\lambda=1$ (top row) and $\lambda=10$ (second row).}\label{fig:compare1}\end{figure}
%\begin{figure}
%\centering
%\includegraphics[width=0.32\textwidth]{1Dlambda10_alogirhtm1.png}
%\includegraphics[width=0.32\textwidth]{1Dlambda10_NFL.png}
%\caption{Comparison the convergence rates under the 1D chain graph setting with $\lambda=10$.}\label{fig:compare2}
%\end{figure}
\begin{figure}
\centering
\includegraphics[width=0.32\textwidth]{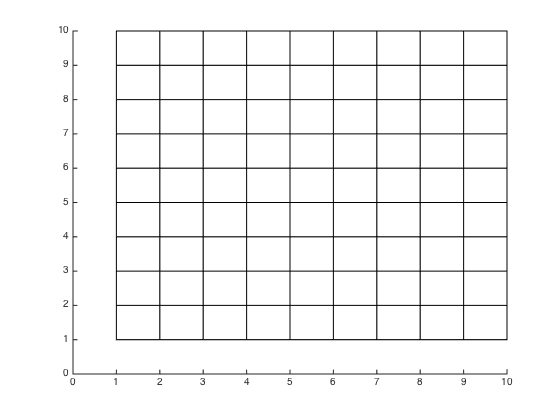}
\includegraphics[width=0.32\textwidth]{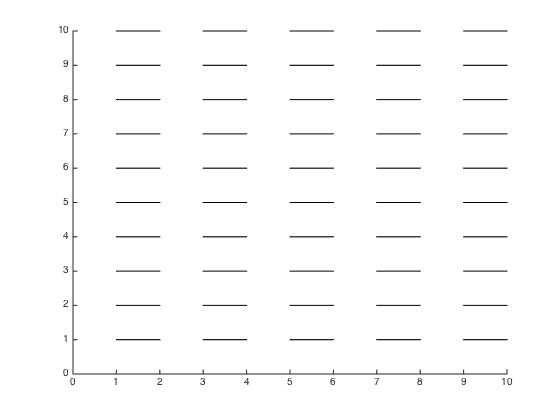}
\caption{Visualization of a two-dimensional grid graph of size $10\times 10$ (left) and the corresponding $\calE_0$ for this graph (right).}\label{fig:grid}
\end{figure}

\begin{figure}
\centering
\includegraphics[width=0.32\textwidth]{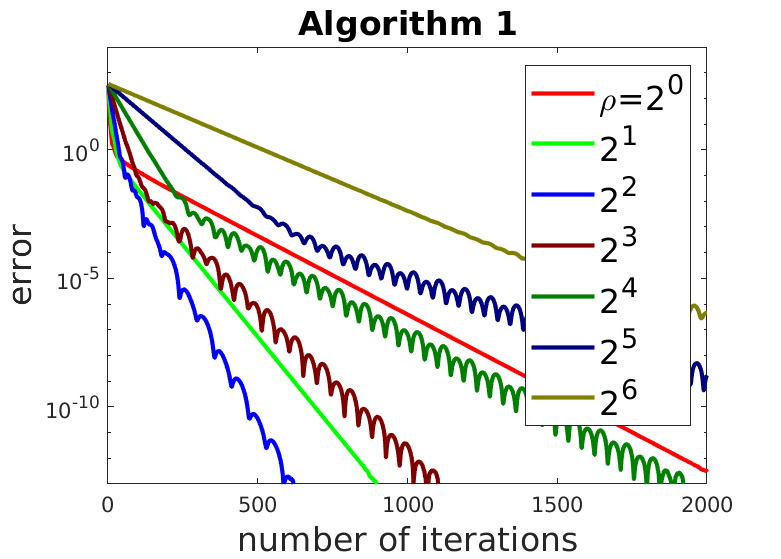}
\includegraphics[width=0.32\textwidth]{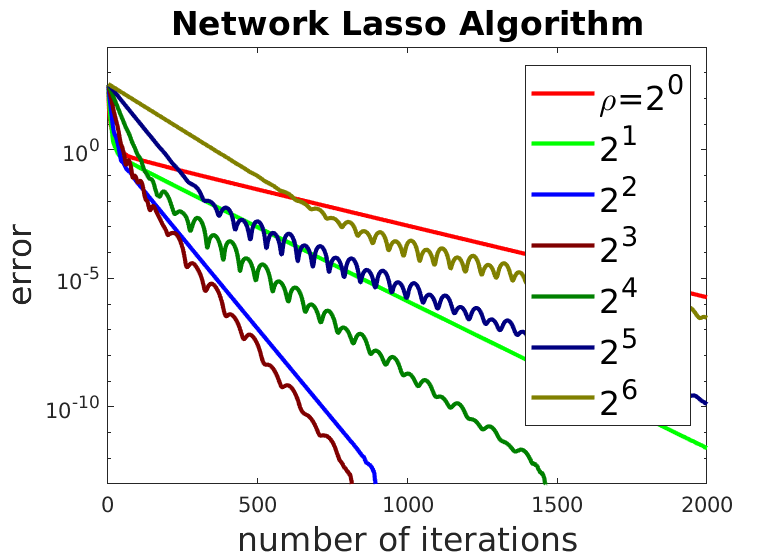}\\
\includegraphics[width=0.32\textwidth]{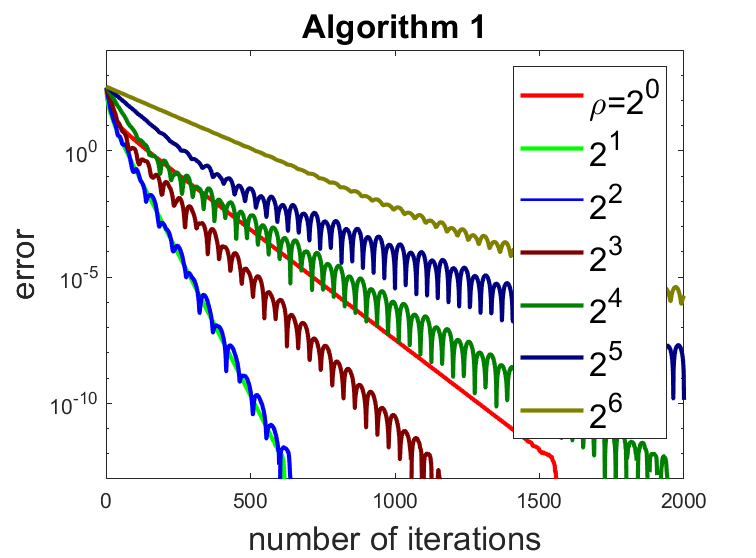}
\includegraphics[width=0.32\textwidth]{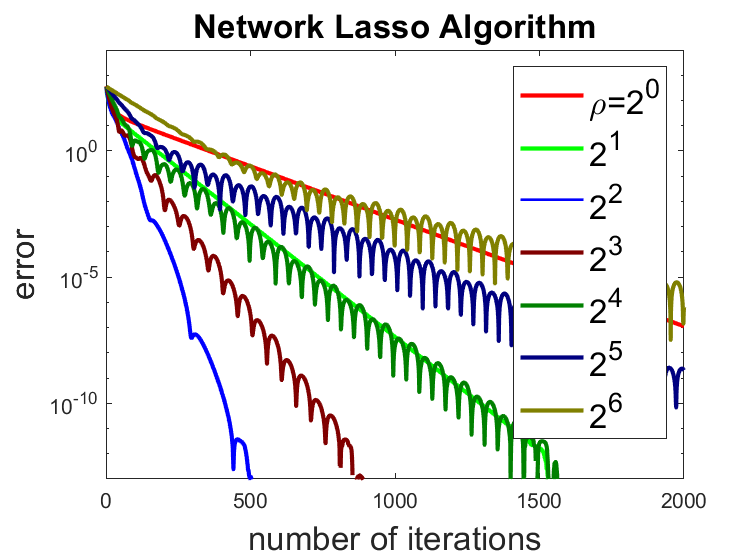}
\caption{Comparison the convergence rates under the 2D grid graph setting with $\lambda=1$ (top row) and $\lambda=5$ (second row).}\label{fig:compare3}
\end{figure}
\begin{figure}
\centering
\includegraphics[width=0.32\textwidth]{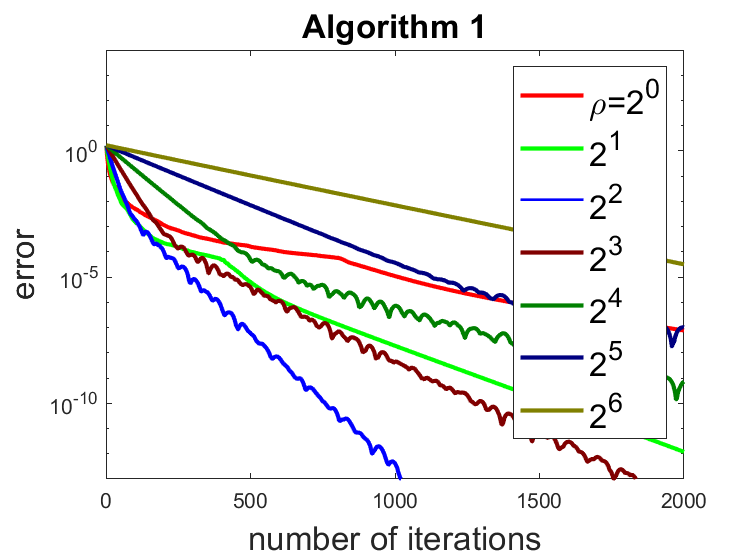}
\includegraphics[width=0.32\textwidth]{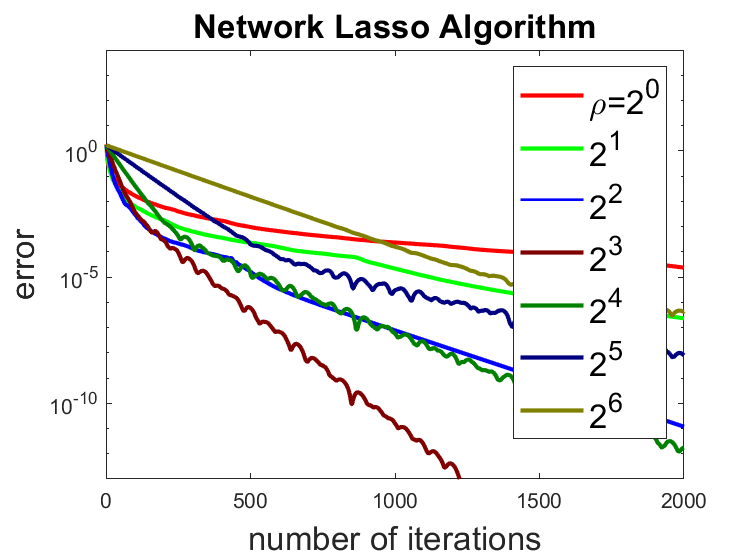}\\
\includegraphics[width=0.32\textwidth]{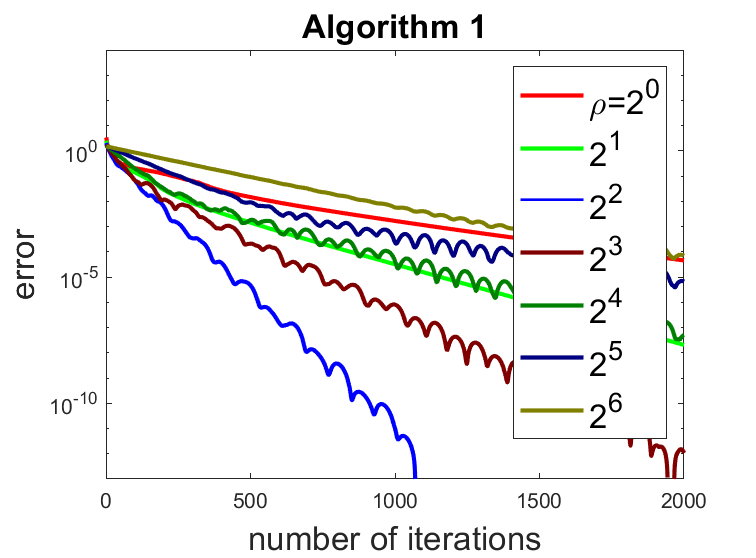}
\includegraphics[width=0.32\textwidth]{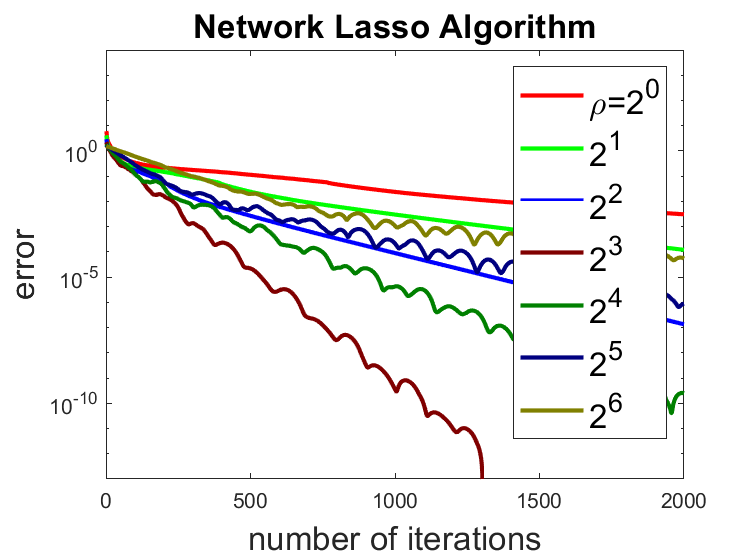}
\caption{Comparison the convergence rates for the Chicago crime dataset with $\lambda=0.05$ (top row) and $\lambda=0.25$ (second row).}\label{fig:compare4}
\end{figure}
We also test the two-dimensional grid graph example. We generate data in the form of a $64$ by $64$ grid of points; the value is equal to $[0,0,0]$ for points within a distance of $16$ from the middle of the grid, and $[0.4,0.7,1]$ for all other points, and add noise of $N(\mathbf{0},\bI_{3\times 3})$ for all points. For this example, we have a natural choice of $\calE_0$ as visualized in the right Figure of Figure~\ref{fig:grid}.  The convergence rates are shown in Figure~\ref{fig:compare3}, which shows that Algorithm~\ref{alg:ADMM3} has a comparable or faster convergence rate as the network lasso algorithm ADMM. Combining it with the fact that Algorithm~\ref{alg:ADMM3} has a smaller computational complexity per iteration, this implies the numerical superiority of Algorithm~\ref{alg:ADMM3}.

\subsection{Real dataset}
We also use an example of a graphical fused lasso problem with a reasonably large, geographically-defined underlying graph. The data comes from police reports made publicly available by the city of Chicago, from 2001 until the present (Chicago Police Department 2014) and is available as the supplementary files of~\citep{Arnold2016}. In this dataset, the vertices represent the census blocks and the edges represent the neighboring blocks, and more detailed explanation of this dataset is deferred to the supplementary file. This graph has $2162$ vertices, $6995$ edges and by running the greedy algorithm as described in Section 2.2, we obtain $\calE_0$ with $1025$ edges and $\tilde{m}_0=797$.  The result of the experiment in Figure~\ref{fig:compare4}  shows when $\lambda=0.05$ or $0.25$, Algorithm~\ref{alg:ADMM3} converges much faster than the network lasso algorithm. Since Algorithm~\ref{alg:ADMM3} has a smaller computational complexity per iteration as analyzed, this experiment shows that our algorithm numerically converges faster than network lasso.% Similar properties is also observed for $\lambda=0.05$, with visualizations deferred to the supplementary file.
\section{Conclusion}

This paper proposes a new ADMM algorithm for solving graphic fused lasso, based on a novel method of dividing the objective function into two components. Compared with the standard ADMM algorithm of network lasso, it has a similar complexity per iteration while usually converges within fewer iterations. As for future directions, it would be interesting to theoretically analyze its advantage, and  explore other ways of dividing the objective function which could even future improve the performance of the ADMM algorithm for graph-fused lasso.

The idea of the proposed algorithm can also be applied to other problems such as trend filtering~\citep{Ramdas2015}, defined by $
\argmin_{\bx_i, i=1,\cdots,n}\sum_{i=1}^nf_i(\bx_i)+\lambda\sum_{i=2}^{n-1}\|\bx_{i-1}-2\bx_i+\bx_{i+1}\|.$
We may divide the objective function into $\sum_{i=1}^nf_i(\bx_i)+\|\bx_1-2\bx_2+\bx_3\|+\|\bx_4-2\bx_5+\bx_6\|+\cdots$ and $\|\bx_2-2\bx_3+\bx_4\|+\|\bx_3-2\bx_4+\bx_5\|+\|\bx_5-2\bx_6+\bx_7\|+\|\bx_6-2\bx_7+\bx_8\|+\cdots$. The analysis of its performance and the comparison with standard algorithms would be another possible future direction.%Then each iteration requires solving the problems in the form of
%\[
%\argmin_{\bx_1,\bx_2,\bx_3}f_1(\bx_1)+f_2(\bx_2)+f_3(\bx_3)+\lambda\|\bx_1-2\bx_2+\bx_3\|+c_1(\bx_1-\bt_1)^2+c_2(\bx_2-\bt_2)^2+c_3(\bx_3-\bt_3)^2,
%\]
%which has an explicit solution in some cases, such as $f_i$ being square functions.

% \section{Appendix}
% \subsection{Asymptotic Convergence}
% One way to analyze its convergence rate is to calculate its asymptotic convergence rate. Denote the errors of $\bx_m^{(k)}$ by $e(\bx_m^{(k)})$ then we have the following result:
% \begin{align*}
% e(x_{m,i}^{(k+1)}) &= \begin{cases}\left(1+\frac{\rho M}{2}\right)\frac{\rho Me(z_i^{(k)})-Me(v_{m,i}^{(k)})}{\rho M+2} \,\,\,\,\,\,\,\,\,\,\,\,\,\,\,\,\,\,\,\,\,\,\,\,\,\,\,\,\,\,\,\,\,\,\,\,\,\,\,\,\,\,\text{if $(i,j)\in G_m$ and $x_i^*\neq x_j^*$}\\
% \left(1+\frac{\rho M}{2}\right)\frac{\rho M(e(z_i^{(k)})+e(z_j^{(k)}))-M(e(v_{m,i}^{(k)})+e(v_{m,j}^{(k)}))}{2(\rho M+2)} \text{if $(i,j)\in G_m$ and $x_i^* = x_j^*$}
% \end{cases}\\
% e(\bz^{(k+1)})&=\frac{1}{M}\sum_{m=1}^Me(\bx_m^{(k+1)})+\frac{1}{M\rho}\sum_{m=1}^Me(\bv_m^{(k)})\\
% e(\bv_m^{(k+1)})&=e(\bv_m^{(k)})+\rho(e(\bx_m^{(k+1)})-e(\bz^{(k+1)})),\,\,\text{for all $1\leq m\leq M$}.
% \end{align*}
% Based on it we can write down the evolution of errors by a linear operator $L$ such that
% \[
% \big(e(\bx_1^{(k+1)}),\cdots,e(\bx_m^{(k+1)}),e(\bz^{(k+1)}),e(\bv_1^{(k+1)}),\cdots,e(\bv_m^{(k+1)})\big)=L(e(\bx_1^{(k)}),\cdots,e(\bx_m^{(k)}),e(\bz^{(k)}),e(\bv_1^{(k)}),\cdots,e(\bv_m^{(k)})),
% \]
% and the asymptotic convergence rate is given by the largest eigenvalue of $L$ (ranked by the value of the real components).
\section{Proofs}\label{sec:proof}

\subsection{Proof of Lemma~\ref{lemma:ADMM3}}
Let us consider the problem
\begin{align}\label{eq:proposed2}
\argmin_{\{\bx_i\}_{i\in\calE}, \{\bz_{st}\}_{(s,t)\in\calE_1}} \left(\sum_{i\in\calE}f_i(\bx_i)+\lambda\sum_{(s,t)\in\calE_0}\|\bx_s-\bx_t\|\right) + \lambda\sum_{(s,t)\in\calE_1}\|\bz_{st}\|,\\\,\,\text{s.t. $\bz_{st}=\bx_s-\bx_t$, $\bz_{ts}=\bx_s+\bx_t$.}\nonumber
\end{align}
and its associated Lagrangian of
\begin{align}\label{eq:lagrangian2}
\bar{L}_\rho(x,z,u)=\sum_{i\in\calV}f_i(\bx_i)&+\lambda\sum_{(s,t)\in\calE_0}\|\bx_s-\bx_t\|+\sum_{(s,t)\in\calE_1}\Big(\lambda \|{\bz}_{st}\|+\bu_{st}^T(\bz_{st}-\bx_s+\bx_t)\\&+\bu_{ts}^T(\bz_{ts}-\bx_s-\bx_t)+\frac{\rho}{2}\|\bz_{st}-\bx_s+\bx_t\|^2+\frac{\rho}{2}\|\bz_{ts}-\bx_s-\bx_t\|^2\Big),
\end{align}
as well as the ADMM algorithm of
\begin{align}\label{eq:ADMM2}
&x^{(k+1)}=\argmin_{x}\bar{L}_\rho(x,z^{(k)},u^{(k)})\\
&z^{(k+1)}=\argmin_{z}\bar{L}_\rho(x^{(k+1)},z,u^{(k)})\\
&u_{st}^{(k+1)}=u_{st}^{(k)}+\rho(z_{st}^{(k+1)}-x_s^{(k+1)}+x_t^{(k+1)}),\,\,u_{ts}^{(k+1)}=u_{ts}^{(k)}+\rho(z_{ts}^{(k+1)}-x_s^{(k+1)}-x_t^{(k+1)}).\label{eq:ADMM23}
\end{align}
It can be verified that the update of \eqref{eq:ADMM2}-\eqref{eq:ADMM23} with $\bar{L}_\rho$ is in fact identical to the update of \eqref{eq:ADMM1}-\eqref{eq:ADMM13} with $\hat{L}_{2\rho}$. In addition, using the fact that \eqref{eq:proposed2} is  \eqref{eq:proposed3} with  additional constraints $\bz_{ts}=\bx_s+\bx_t$, and Lemma~\ref{lemma:augmented}
 implies the equivalence between \eqref{eq:ADMM2}-\eqref{eq:ADMM23} is identical to \eqref{eq:ADMM3}-\eqref{eq:ADMM33}.
\begin{lemma}\label{lemma:augmented}
The preconditioned ADMM procedure for solving $\min_{\bx,\by} f(\bx)+g(\by)$ subject to $\bA\bx+\bB\by=\bc$
%is based on the Lagrangian $L(\bx,\by,\bv)=f(\bx)+g(\by)+\bv^T(\bA\bx+\bB\by-\bc)+\frac{\rho}{2}\|\bA\bx+\bB\by-\bc\|^2$:
%\begin{align*}
%\bx^{(k+1)}&=\argmin_{\bx}L(\bx,\by^{(k)},\bv^{(k)}),\\ \by^{(k+1)}&=\argmin_{\by}L(\bx^{(k+1)},\by,\bv^{(k)}),\\ \bv^{(k+1)}&=\bv^{(k)}+\rho(\bA\bx^{(k+1)}+\bB\by^{(k+1)}-\bc).
%\end{align*}
%For solving $\min_{\bx,\by} f(\bx)+g(\by)$ subject to $\bA\bx+\bB\by=\bc$, the preconditioned ADMM procedure
\begin{align}
\bx^{(k+1)}&=\argmin_{\bx}L(\bx,\by^{(k)},\bv^{(k)})+\frac{\rho}{2}(\bx-\bx^{(k)})^T\bC_1^T\bC_1(\bx-\bx^{(k)}),\label{eq:precondition_ADMM1}\\ \by^{(k+1)}&=\argmin_{\by}L(\bx^{(k+1)},\by,\bv^{(k)})+\frac{\rho}{2}(\by-\by^{(k)})^T\bC_2^T\bC_2(\by-\by^{(k)}),\label{eq:precondition_ADMM2}\\ \bv^{(k+1)}&=\bv^{(k)}+\rho(\bA\bx^{(k+1)}+\bB\by^{(k+1)}-\bc),\label{eq:precondition_ADMM3}
\end{align}
where  $L(\bx,\by,\bv)=f(\bx)+g(\by)+\bv^T(\bA\bx+\bB\by-\bc)+\frac{\rho}{2}\|\bA\bx+\bB\by-\bc\|^2$, is equivalent to the standard ADMM procedure applied to the augmented problem
\[
\min_{\bx,\by}  f(\bx)+g(\by),\,\,\text{s.t. $[\bA\bx+\bB\by,\bC_1\bx,\bC_2\by]=[\bc,\bz,\bw]$.}
\]
\end{lemma}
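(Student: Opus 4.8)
The plan is to exhibit the preconditioned iteration \eqref{eq:precondition_ADMM1}--\eqref{eq:precondition_ADMM3} as ordinary two-block ADMM for the augmented problem, by introducing the free variables $\bz,\bw$ through the constraints $\bC_1\bx=\bz$ and $\bC_2\by=\bw$ and then picking the block structure carefully. First I would group the primal variables as block one $(\bx,\bw)$ and block two $(\by,\bz)$ — the \emph{crossed} assignment — rather than the naive pairing of $\bz$ with $\bx$ and $\bw$ with $\by$. This choice is essential: if $\bz$ shared a block with $\bx$, then minimizing the augmented Lagrangian jointly over $(\bx,\bz)$ would let the inner minimization over $\bz$ render the $\bC_1$-terms independent of $\bx$, and the $\bC_1$-penalty would vanish from the $\bx$-subproblem, leaving plain ADMM with no preconditioner. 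With the crossed assignment each of the three constraints genuinely couples the two blocks, so the standard ADMM recipe applies.

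Next I would write the augmented Lagrangian with multipliers $\bv,\bp,\bq$ for the three constraints and perform the block sweeps in order, eliminating the auxiliary variables in closed form. Minimizing over $\bw$ in the block-one step gives $\bw^{(k+1)}=\bC_2\by^{(k)}+\tfrac{1}{\rho}\bq^{(k)}$, and minimizing over $\bz$ in the block-two step gives $\bz^{(k+1)}=\bC_1\bx^{(k+1)}+\tfrac{1}{\rho}\bp^{(k)}$. Substituting the first into the $\by$-subproblem, the cross term and the quadratic penalty combine as $\bq^{(k)T}\bC_2\by+\tfrac{\rho}{2}\|\bC_2\by-\bw^{(k+1)}\|^2=\tfrac{\rho}{2}\|\bC_2\by-\bC_2\by^{(k)}\|^2+\text{const}$, since the linear part $(\bq^{(k)}-\rho\bw^{(k+1)})^T\bC_2\by$ reduces to $-\rho(\bC_2\by^{(k)})^T\bC_2\by$ regardless of $\bq^{(k)}$. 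This is exactly the proximal term of \eqref{eq:precondition_ADMM2}, so the $\by$-update already matches the preconditioned one.

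For the $\bx$-update the bookkeeping is slightly different because $\bz^{(k)}$ is produced in the \emph{previous} block-two step, so I would first show by a one-line induction that $\bp^{(k)}=0$ for all $k$: the dual update gives $\bp^{(k+1)}=\bp^{(k)}+\rho(\bC_1\bx^{(k+1)}-\bz^{(k+1)})=\bp^{(k)}-\bp^{(k)}=0$ once the closed form for $\bz^{(k+1)}$ is inserted. With $\bp^{(k)}=0$ and hence $\bz^{(k)}=\bC_1\bx^{(k)}$, the $\bC_1$-terms in the block-one subproblem collapse to $\tfrac{\rho}{2}\|\bC_1\bx-\bC_1\bx^{(k)}\|^2=\tfrac{\rho}{2}(\bx-\bx^{(k)})^T\bC_1^T\bC_1(\bx-\bx^{(k)})$, matching \eqref{eq:precondition_ADMM1}. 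Because the constraint $\bA\bx+\bB\by=\bc$ and its multiplier $\bv$ occur identically in $L$ and in the augmented Lagrangian, the $\bv$-update \eqref{eq:precondition_ADMM3} is literally the dual update for that constraint. Initializing the augmented ADMM with $\bp^{(0)}=\bq^{(0)}=0$ and $\bz^{(0)}=\bC_1\bx^{(0)}$ makes the first sweep agree as well, so an induction on $k$ shows the two sequences $\{(\bx^{(k)},\by^{(k)},\bv^{(k)})\}$ coincide.

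The main obstacle is the asymmetric bookkeeping forced by the sequential (Gauss--Seidel) order of ADMM: the two auxiliary blocks are refreshed at different points of the sweep, so $\bw$ enters the $\by$-subproblem with the current multiplier $\bq^{(k)}$ while $\bz$ enters the $\bx$-subproblem with a stale one, and the two sides must be reconciled by different mechanisms — a direct cancellation on the $\by$-side and the vanishing of $\bp$ on the $\bx$-side. Keeping the indices and the update order of $\bz,\bw,\bp,\bq$ straight, and checking that the discarded constants never affect the argmin, is where the care lies; the underlying algebra is elementary.
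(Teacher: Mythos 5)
Your proposal is correct and takes essentially the same route as the paper's proof: the same crossed block assignment $(\bx,\bw)$ and $(\by,\bz)$, the same closed-form elimination of $\bw$ and $\bz$, the same observation that the multiplier on the constraint $\bC_1\bx=\bz$ vanishes after one dual update (your $\bp$, the paper's $\bv_1$), and the same substitution of $\bw^{(k+1)}$ into the $\by$-step. Your write-up is in fact slightly more explicit than the paper's, spelling out the initialization $\bp^{(0)}=\bq^{(0)}=\b0$, $\bz^{(0)}=\bC_1\bx^{(0)}$ and the cancellation $(\bq^{(k)}-\rho\bw^{(k+1)})^T\bC_2\by=-\rho(\bC_2\by^{(k)})^T\bC_2\by$, but the underlying argument is identical.
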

%Then the condition $\bz_{ts}=\bx_s+\bx_t$ in \eqref{eq:proposed2} can be considered as the augmented condition,

\subsection{Proof of Lemma~\ref{lemma:augmented}}
\begin{proof}[Proof of Lemma~\ref{lemma:augmented}]
Applying the standard ADMM routine to optimize $(\bx,\bw)$ and $(\by,\bz)$ alternatively, the update formula for the augmented ADMM is
\begin{align}
\bx^{(k+1)}&=\argmin_{\bx}L(\bx,\by^{(k)},\bv^{(k)})+\frac{\rho}{2}\|\bC_1^{0.5}\bx-\bz^{(k)}\|^2+\bv_1^{(k)\,T}(\bC_1^{0.5}\bx-\bz^{(k)}),\label{eq:augmented_ADMM1}\\
\bw^{(k+1)}&=C_2^{0.5}\by^{(k)}+\frac{1}{\rho}\bv_2^{(k)},\label{eq:augmented_ADMM2}
\\\by^{(k+1)}&=\argmin_{\by}L(\bx^{(k+1)},\by,\bv^{(k)})+\frac{\rho}{2}\|\bC_2^{0.5}\by-\bw^{(k+1)}\|^2+\bv_2^{(k)\,T}(\bC_2^{0.5}\by-\bw^{(k+1)}),\label{eq:augmented_ADMM3}\\ \bv^{(k+1)}&=\bv^{(k)}+\rho(\bA\bx^{(k+1)}+\bB\by^{(k+1)}-\bc), \label{eq:augmented_ADMM4} \\
\bz^{(k+1)}&=C_1^{0.5}\bx^{(k+1)}+\frac{1}{\rho}\bv_1^{(k)}\\
\bv_1^{(k+1)}&=\bv_1^{(k)}+\rho(\bC_1^{0.5}\bx^{(k+1)}-\bz^{(k+1)}),\,\, \bv_2^{(k+1)}=\bv_2^{(k)}+\rho(\bC_2^{0.5}\by^{(k+1)}-\bw^{(k+1)})\label{eq:augmented_ADMM5}.
\end{align}
Note that by plugging the definition of $\bz^{(k+1)}$ in \eqref{eq:augmented_ADMM4} to the definition of $\bv^{(k+1)}$ \eqref{eq:augmented_ADMM5}, $\bv_1^{(k+1)}=0$ for all $k$. So \eqref{eq:augmented_ADMM4}  implies that $\bz^{(k+1)}=C_1^{0.5}\bx^{(k+1)}$ and \eqref{eq:augmented_ADMM1} is equivalent to \eqref{eq:precondition_ADMM1}.

Plugging in the definition of $\bw^{(k+1)}$ to \eqref{eq:augmented_ADMM3}, we obtain the equivalence between \eqref{eq:augmented_ADMM3} and  \eqref{eq:precondition_ADMM2}.
\end{proof}
\subsection{Proof of Theorem~\ref{thm:convergence}}
\begin{proof}[Proof of Theorem~\ref{thm:convergence}]
By calculation, the ADMM algorithm is equivalent to the Douglas-Rachford splitting method applied to
\[
\max_{\bz}-\bb^T\bz-f_1^*(-\bA_1^T\bz)-f_2^*(-\bA_2^T\bz)
\]
with two parts given by $f=f_2^*(-\bA_2^T\bz)$ and $g=\bb^T\bz+f_1^*(-\bA_1^T\bz)$ respectively, and the  Douglas-Rachford splitting method is an iterative method that minimizing $f(\bx)+g(\bx)$ with update formula
\[
\bx^{(k+1)}=\frac{1}{2}[(\bI-2\prox_{\rho f})(\bI-2\prox_{\rho g})+\bI](\bx^{(k)}).
\]

Note that locally around the optimal solution we have
\begin{align*}
&\prox_{\rho f}=(\bI+\rho \partial f)^{-1},\,\,\,\partial f(\bx)= \bA_2\bC_2^{-1}\bA_2^T\bx+\bc_1,\\&\prox_{\rho g}=(\bI+\rho \partial g)^{-1},\,\,\,\partial g(\bx)= \bA_1\bC_1^{-1}\bA_1^T\bx+\bc_2
\end{align*}
for some $\bc_1$ and $\bc_2$, each iteration of the algorithm is a linear operator in the form of 
\begin{align*}
&\frac{1}{2}[(\bI-2\prox_{\rho f})(\bI-2\prox_{\rho g})+\bI](\bx)
%=\frac{1}{2}[(\bI-2(\bI+\rho \partial f)^{-1})(\bI-2(\bI+\rho \partial g)^{-1})+\bI]\\
\\=&\frac{1}{2}[(\bI-2(\bI+\rho \bA_2\bC_2^{-1}\bA_2^T)^{-1})(\bI-2(\bI+\rho \bA_1\bC_1^{-1}\bA_1^T)^{-1})+\bI](\bx)+\bc_0.
\end{align*}
As a result, the algorithm converges in the order of
\[
\left(\frac{1}{2}[(\bI-2(\bI+\rho \bA_2\bC_2^{-1}\bA_2^T)^{-1})(\bI-2(\bI+\rho \bA_1\bC_1^{-1}\bA_1^T)^{-1})+\bI]\right)^k,
\]
where $k$ is the number of iterations. The theorem is then proved.
\end{proof}

%\section{BibTeX}

%We hope you've chosen to use BibTeX!\ If you have, please feel free to use the package natbib with any bibliography style you're comfortable with. The .bst file Chicago was used here, and agsm.bst has been included here for your convenience. 

\bibliographystyle{Chicago}
\bibliography{bib-online}
\end{document}